\documentclass{article}

\usepackage{amsthm}
\usepackage{amsmath, amssymb}
\usepackage{hyperref}
\usepackage{tikz}
\usetikzlibrary{matrix}
\usepackage[matrix, arrow, curve]{xy}
\usepackage{xcolor}

\newtheorem{Thm}{Theorem}[section]
\newtheorem{Def}[Thm]{Definition}

\newtheorem{Prop}[Thm]{Proposition}
\newtheorem{Kor}[Thm]{Corollary}
\newtheorem{Rem}[Thm]{Remark}

\setlength{\parindent}{0pt}

\title{Some remarks on Spin-orbits of unit vectors}

\author{Tariq Syed\\
Institut f{\"u}r Mathematik\\
Johannes Gutenberg-Universit{\"a}t Mainz\\
Staudingerweg 9\\
55128 Mainz, Germany\\
tariq.syed@gmx.de}

\date{\today
}
 
\begin{document}

\maketitle

\begin{abstract}
For $n \in \mathbb{N}$ and a commutative ring $R$ with $2 \in R^{\times}$, the group $SL_n (R)$ acts on the set $Um_n (R)$ of unimodular vectors of length $n$ and $Spin_{2n}(R)$ acts on the set of unit vectors $U_{2n-1}(R)$. We give an example of a ring for which the comparison map $Um_n (R)/SL_n (R) \rightarrow U_{2n-1}(R)/Spin_{2n}(R)$ fails to be bijective.\\
2020 Mathematics Subject Classification: 14F42, 15A66, 19A13, 19G38.\\ Keywords: unimodular rows, stably free modules, Spin groups, Clifford algebras.
\end{abstract}

\tableofcontents

\section{Introduction}

Let $R$ be a commutative ring with $2 \in R^{\times}$ and $n \in \mathbb{N}$. We denote by $Um_n (R)$ the set of unimodular vectors of length $n$, i.e., the set of vectors $a={(a_{1},...,a_{n})}^{t} \in R^n$ with $a_{i} \in R$, $1 \leq i \leq n$, such that $\langle a_{1},...,a_{n} \rangle = R$; furthermore, we denote by $U_{2n-1}(R)$ the corresponding set of unit vectors, i.e., the set of elements $(a,b) \in R^n \oplus R^n$ with $\sum_{i=1}^{n} a_i b_i = 1$, where $a={(a_{1},...,a_{n})}^{t}$ and $b={(b_{1},...,b_{n})}^{t}$. The group $SL_n (R)$ and hence any of its subgroups act on $Um_n (R)$, while the group $Spin_{2n}(R)$ acts on $U_{2n-1}(R)$. The orbit spaces $Um_n (R)/SL_n (R)$ play a central role in the study of stably free modules over commutative rings (e.g., see \cite{FRS}, \cite{S} or \cite{Sy2}). There are well-defined comparison maps between orbit spaces
\begin{center}
$Um_{n}(R)/SL_n (R) \rightarrow U_{2n-1}(R)/Spin_{2n}(R)$,
\end{center}
which are automatically surjective. Vineeth Chintala has proven in \cite{C2} that the analogous comparison maps
\begin{center}
$Um_{n}(R)/E_n (R) \rightarrow U_{2n-1}(R)/Epin_{2n}(R)$
\end{center}
between the corresponding elementary orbit spaces are bijective for $n \geq 3$; this laid the foundation for the study of stably free $R$-modules via Spin-orbits of unit vectors. It is therefore natural to ask whether the comparison maps between $SL_n (R)$-orbits and $Spin_{2n}(R)$-orbits are also bijective for commutative rings when $n \geq 3$.\\
While we observe that the comparison map is bijective for some specific classes of rings (cf. Theorem \ref{bijectivity}), we give a negative answer to this question in this paper. For any field $k$ with $char(k) \neq 2$ and $n \in \mathbb{N}$, we let $S_{2n-1} = k[x_{1},...,x_{n},y_{1},...,y_{n}]/\langle \sum_{i=1}^{n} x_{i}y_{i} - 1 \rangle$. While the comparison map for unimodular vectors of length $3$ is bijective for $S_5$ (cf. Corollary \ref{S5}), we prove the following result for $S_7$:

\paragraph{Theorem.} If $R=S_7$ and $k$ is infinite perfect with $char(k) \neq 2$, then the comparison map $Um_3 (R)/SL_3 (R) \rightarrow U_{5}(R)/Spin_6 (R)$ is not injective.\\\\
In particular, the ring $R=S_7$ gives an example of a ring for which the comparison map between $SL_{n}(R)$-orbits and $Spin_{2n}(R)$-orbits fails to be bijective.


The paper is structured as follows: In Section \ref{2.1} we introduce the comparison maps between the orbit space of unimodular vectors under the action of $SL_{n}(R)$ and the orbit space of unit vectors under the action of $Spin_{2n}(R)$; in Section \ref{2.2} we introduce the degree maps and discuss the connection between the degree maps and Spin-orbits of unit vectors. Section \ref{2.3} serves as a brief introduction to $\mathbb{A}^1$-homotopy theory as needed for this paper. Finally, in Section \ref{3} we will prove some bijectivity results for the comparison maps above and eventually prove the main result of this paper.

\section*{Acknowledgements}
The author would like to thank the anonymous referee for suggesting changes which greatly improved the exposition of the paper. The author would like to thank Alexey Ananyevskiy, Vineeth Chintala, Jean Fasel and Ravi Rao for helpful comments on this work. The author was funded by the Deutsche Forschungsgemeinschaft (DFG, German Research Foundation) - Project number 461453992.

\section{Preliminaries}

Throughout this paper, $R$ will always denote a commutative ring with $2 \in R^{\times}$.

\subsection{The comparison maps}\label{2.1}

At first, we introduce the set of unimodular vectors:

\begin{Def}
For $n \in \mathbb{N}$, we denote be $Um_{n} (R)$ the set of unimodular vectors of length $n$ over $R$, i.e., the set of vectors $a={(a_{1},...,a_{n})}^{t} \in R^n$ with $a_{i} \in R$, $1 \leq i \leq n$, such that $\langle a_{1},...,a_{n} \rangle = R$.
\end{Def}

The group $GL_{n} (R)$ of invertible matrices of rank $n$ over $R$ acts on the left on $Um_{n}(R)$ by matrix multiplication. In particular, all subgroups of $GL_{n}(R)$ and hence $SL_{n}(R)$ and $E_{n}(R)$ act on $Um_{n}(R)$. In this paper unimodular vectors are usually thought of as column vectors, but in the literature unimodular vectors are often defined as unimodular rows or unimodular row vectors; in the latter case, $GL_n (R)$ and its subgroups obviously act on the right on the set of unimodular rows of length $n$ over $R$ by matrix multiplication. The two definitions make no essential difference: As a matter of fact, transposition of vectors and matrices induces a bijection between the orbit space of unimodular column vectors of length $n$ over $R$ under the left action of $GL_n (R)$ and the orbit space of unimodular row vectors of length $n$ over $R$ under the right action of $GL_n (R)$; analogous statements hold for the subgroups $E_n (R)$ and $SL_n (R)$ of $GL_n (R)$. Therefore we will just speak of unimodular vectors of length $n$ over $R$ and usually think of them as unimodular column vectors and elements of $R^n$ in this paper. Now consider the space $H(R^n) = R^n \oplus R^n$ equipped with the quadratic form
\begin{center}
$q(a,b) = \sum_{i=1}^{n} a_i b_i$, 
\end{center}
where $a = {(a_{1},...,a_{n})}^{t}, b={(b_{1},...,b_{n})}^{t} \in R^n$.

\begin{Def}
The set $U_{2n-1} (R)$ of unit vectors is the set of elements $(a,b)$ in $H(R^n)$ with $q(a,b) = 1$.
\end{Def}

Recall now from \cite[Chapter IV, \S 1]{K} that the Clifford algebra $Cl(V,q)$ of $V=H(R^n)$ is the quotient of the tensor algebra
\begin{center}
$T(V) = R \oplus V \oplus V^{\otimes 2} \oplus ...$
\end{center}

by the two-sided ideal generated by the elements of the form $x \otimes x - q(x)$ for $x \in V$. Grading $T(V)$ by even and odd degrees, $Cl(V,q)$ inherits a $\mathbb{Z}/2\mathbb{Z}$-grading. Furthermore, there is an inclusion $V \hookrightarrow Cl(V,q), (v,w) \mapsto (v,w)$.\\
Now let $a={(a_{1},...,a_{n})}^{t}, b={(b_{1},...,b_{n})}^{t} \in R^n$. Following the notation in \cite{AF2} (and not the notation of \cite{C1} or \cite{C2}), the Suslin matrix can be defined inductively by $\alpha_{n}(a,b) = (a_{1})$ if $n=1$ and

\begin{center}
$\alpha_{n} (a,b) = \begin{pmatrix}
a_{1} {id}_{2^{n-2}} & \alpha_{n-1} (a',b') \\
-{\alpha_{n-1} (b',a')}^{t} & b_{1} {id}_{2^{n-2}}
\end{pmatrix}$
\end{center}

if $n \geq 2$, where $a'={(a_{2},...,a_{n})}^{t}, b'={(b_{2},...,b_{n})}^{t} \in R^{n-1}$. In \cite[Lemma 5.1]{S} Suslin proved that $\det (\alpha_{n} (a,b)) = {(a^{t} b)}^{2^{n-2}}$ if $n \geq 2$; in particular, if $a = {(a_{1},...,a_{n})}^{t}$ is a unimodular vector of length $n$ and $b={(b_{1},...,b_{n})}^{t}$ defines a section of $a$, i.e., $q (a,b) =\sum_{i=1}^{n} a_{i} b_{i} = 1$, then $\alpha_{n} (a,b) \in SL_{2^{n-1}} (R)$.\\
Similarly, one defines $\overline{\alpha_{n}(a,b)} = (b_{1})$ if $n=1$ and

\begin{center}
$\overline{\alpha_{n} (a,b)} = \begin{pmatrix}
b_{1} {id}_{2^{n-2}} & -\alpha_{n-1} (a',b') \\
{\alpha_{n-1} (b',a')}^{t} & a_{1} {id}_{2^{n-2}}
\end{pmatrix}$
\end{center}

if $n \geq 2$, where again $a'={(a_{2},...,a_{n})}^{t}, b'={(b_{2},...,b_{n})}^{t} \in R^{n-1}$. It was proven in \cite[Lemma 5.1]{S} that one has ${\alpha_{n}(b,a)}^{t} = \overline{\alpha_{n}(a,b)}$ and $\alpha_{n}(a,b)\overline{\alpha_{n}(a,b)} = \overline{\alpha_{n}(a,b)}\alpha_{n}(a,b) = (a \cdot b^t)id_{2^{n-1}}$.\\
Altogether, one obtains maps
\begin{center}
$\alpha_{n}: U_{2n-1}(R) \rightarrow GL_{2^{n-1}}(R)$
\end{center}
and
\begin{center}
$\overline{\alpha_{n}}: U_{2n-1}(R) \rightarrow GL_{2^{n-1}}(R)$.
\end{center}
One can use Suslin matrices in order to obtain a description of the Clifford algebra $Cl(V,q)$ (cf. \cite[Section 2]{C1} and \cite[Section 2.3]{C1}): Let us denote by $M_{2^n}(R)$ the set of $2^n \times 2^n$-matrices over $R$. The assignment

\begin{center}
$\phi (a,b) = \begin{pmatrix}
0 & \alpha_{n}(a,b) \\
\overline{\alpha_{n}(a,b)} & 0
\end{pmatrix}$
\end{center}

descends to a map $Cl(V,q) \rightarrow M_{2^n}(R)$ and is an isomorphism of $\mathbb{Z}/2\mathbb{Z}$-graded algebras, where matrices of the form

\begin{center}
$\begin{pmatrix}
g_{1} & 0 \\
0 & g_{2}
\end{pmatrix}$
\end{center}

are of degree $0$ and matrices of the form

\begin{center}
$\begin{pmatrix}
0 & g_{1} \\
g_{2} & 0
\end{pmatrix}$
\end{center}

are of degree $1$ (cf. \cite[Theorem 2.3 and remarks thereafter]{C1}). Furthermore, one can inductively define $2^{n-1} \times 2^{n-1}$-matrices $J_{n}$ by $J_{1} = 1$ if $n=1$ and

\begin{center}
$J_{n} = \begin{pmatrix}
0 & J_{n-1} \\
-J_{n-1} & 0
\end{pmatrix}$
\end{center}

if $n \geq 2$ is even or

\begin{center}
$J_{n} = \begin{pmatrix}
J_{n-1} & 0 \\
0 & -J_{n-1}
\end{pmatrix}$
\end{center}

if $n \geq 2$ is odd. Note again that we use the notation from \cite{AF2}. By induction, one sees that $J_n J_n^t = J_n^t J_n = id_{2^{n-1}}$ and $J_n^{-1} = {(-1)}^{\frac{n(n-1)}{2}} J_n$. It follows that $M^{\ast} = J_{n+1} M^t J^t_{n+1}$, where $M \in M_{2^n}(R)$, defines an involution on $M_{2^n}(R)$. As one can check, this involution corresponds to the so-called standard involution of the Clifford algebra $Cl(V,q)$ (cf. \cite[Theorem 3.1]{C1}).\\
Now let us denote by $Cl = Cl(V,q) = Cl_0 \oplus Cl_1$ the $\mathbb{Z}/2\mathbb{Z}$-grading of $Cl(V,q)$.

\begin{Def}
For any $n \in \mathbb{N}$, the Spin group is defined as $Spin_{2n}(R):=\{g \in Cl_0 |~g g^\ast  = 1, g H(R^n) g^{-1} = H(R^n)\}$.
\end{Def}

Following \cite[Chapter IV, \S 6]{K}, there is a map

\begin{center}
$\pi: Spin_{2n}(R) \rightarrow SO_{2n} (R)$
\end{center}

defined by $\pi(g): R^{2n} \rightarrow R^{2n}, (a,b) \mapsto g \cdot (a,b) \cdot g^{-1}$. If we let $U_{2n-1}(R) :=\{(a,b) \in R^n \oplus R^n | q(a,b) = 1\}$ as above, then it is clear that $O_{2n}(R)$ and hence $SO_{2n}(R)$ act on the left on $U_{2n-1}(R)$ by matrix multiplication. Consequently, one obtains a left action of $Spin_{2n}(R)$ on $U_{2n-1}(R)$ via $\pi$. If $n \geq 3$, one has a map

\begin{center}
$Um_{n}(R) \rightarrow U_{2n-1}(R)/SO_{2n}(R)$
\end{center}

which sends any $v \in Um_{n}(R)$ to the orbit of $(v,w)$, where $w$ is any section of $v$. It follows for example from \cite[Theorem 4.3]{C2} that this map is well-defined for $n \geq 3$.\\
Thus, from now on, let us assume $n \geq 3$. Let $v_{1},v_{2} \in Um_{n}(R)$ with sections $w_{1}$ and $w_{2}$ and assume $\sigma v_1 = v_2$ for some $\sigma \in SL_{n}(R)$. Then

\begin{center}
$H(\sigma) = \begin{pmatrix}
\sigma & 0 \\
0 & {\sigma^{t}}^{-1}
\end{pmatrix} \in SO_{2n}(R)$
\end{center}

and hence $(v_{1},w_{1}) \sim_{SO_{2n}(R)} H(\sigma) (v_{1},w_{1}) = (\sigma v_{1},{\sigma^{t}}^{-1}w_{1}) \sim_{SO_{2n}(R)} (v_{2},w_{2})$, where the latter equivalence holds again because of \cite[Theorem 4.3]{C2}. In particular, it follows that the map $Um_{n}(R) \rightarrow U_{2n-1}(R)/SO_{2n}(R)$ descends to a map
\begin{center}
$Um_{n}(R)/SL_{n}(R) \rightarrow U_{2n-1}(R)/SO_{2n}(R)$.
\end{center}
Furthermore, there is an injective homomorphism $\wedge: SL_{n} (R) \rightarrow Spin_{2n}(R)$ which lifts the homomorphism $H:SL_{n}(R) \rightarrow SO_{2n}(R)$ along $\pi$ (cf. \cite[Chapter IV, \S 6]{K}), so by using again \cite[Theorem 4.3]{C2} one obtains a sequence of surjections between orbit spaces
\begin{center}
$Um_{n}(R)/SL_{n}(R) \rightarrow U_{2n-1}(R)/Spin_{2n}(R) \rightarrow U_{2n-1}(R)/SO_{2n}(R)$.
\end{center}


We now try to compare $SL_{n}(R)$-orbits of $Um_{n}(R)$ with $Spin_{2n}(R)$-orbits of $U_{2n-1}(R)$. So let us investigate the injectivity of the comparison map
\begin{center}
$Um_{n}(R)/SL_{n}(R) \rightarrow U_{2n-1}(R)/Spin_{2n}(R)$
\end{center}
above. First of all, let us denote by $St (x,y)$ the subgroup of $Spin_{2n}(R)$ which stabilizes $(x,y) \in U_{2n-1}(R)$. We now prove the following criterion for the injectivity of the comparison map:

\begin{Thm}\label{Spin-criterion}
The natural map $Um_{n}(R)/SL_{n}(R) \rightarrow U_{2n-1}(R)/Spin_{2n}(R)$ above is injective (and hence bijective) if and only if, for any $(x,y) \in U_{2n-1}(R)$, one has $Spin_{2n}(R) = \wedge (SL_{n}(R)) Epin_{2n}(R)St (x,y)$. Furthermore, the orbit space $Um_{n}(R)/SL_{n}(R)$ is trivial if and only if the orbit space $U_{2n-1}(R)/Spin_{2n}(R)$ is trivial and $Spin_{2n}(R) = \wedge(SL_{n}(R))Epin_{2n}(R)St (u_{n})$, where $u_{n} = (e_{n},e_{n})$ and $e_{n}= {(1,0,...,0)}^{t} \in R^n$.
\end{Thm}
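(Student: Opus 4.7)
The plan is to prove both equivalences by a direct orbit-chasing argument, using Chintala's theorem (the bijectivity of $Um_n(R)/E_n(R) \to U_{2n-1}(R)/Epin_{2n}(R)$ for $n \geq 3$) as the crucial bridge that lets us ``promote'' coincidences of first coordinates to $Epin_{2n}(R)$-equivalences of unit vectors.

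For the harder implication of the first statement, suppose the factorization $Spin_{2n}(R) = \wedge(SL_n(R)) Epin_{2n}(R) St(x,y)$ holds for every $(x,y) \in U_{2n-1}(R)$, and suppose $v_{1},v_{2} \in Um_n(R)$ with sections $w_1, w_2$ satisfy $g(v_1, w_1) = (v_2, w_2)$ for some $g \in Spin_{2n}(R)$. Write $g = \wedge(\sigma)\,\epsilon\, s$ with $s \in St(v_1, w_1)$, so that $(v_2, w_2) = \wedge(\sigma)\,\epsilon(v_1, w_1)$. Setting $(v_1', w_1') := \epsilon(v_1, w_1)$, Chintala's theorem gives $v_1' \in E_n(R) \cdot v_1$, while $\wedge(\sigma)$ acts on the first coordinate by $\sigma$, so $v_2 = \sigma v_1'$ is $SL_n(R)$-equivalent to $v_1$. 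Conversely, assume injectivity of the comparison map, and fix $(x,y) \in U_{2n-1}(R)$ and $g \in Spin_{2n}(R)$. Write $g(x,y) = (x', y')$; note $x,x'$ are unimodular with sections $y,y'$, so injectivity produces $\sigma \in SL_n(R)$ with $\sigma x = x'$. Then $\wedge(\sigma)^{-1} g(x,y) = (x, \sigma^{t} y')$ has first coordinate $x$, so by Chintala's theorem it is $Epin_{2n}(R)$-equivalent to $(x,y)$: there exists $\epsilon \in Epin_{2n}(R)$ with $\epsilon(x,y) = (x, \sigma^{t} y')$. Therefore $s := \epsilon^{-1} \wedge(\sigma)^{-1} g$ stabilizes $(x,y)$, yielding $g = \wedge(\sigma)\,\epsilon\, s$ as desired.

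For the second statement, triviality of $Um_n(R)/SL_n(R)$ immediately forces triviality of $U_{2n-1}(R)/Spin_{2n}(R)$ by surjectivity of the comparison map, and the first part (applied vacuously, since injectivity is automatic when the source is a singleton) yields the factorization at every point, in particular at $u_n$. For the converse, assume $U_{2n-1}(R)/Spin_{2n}(R)$ is trivial and that $Spin_{2n}(R) = \wedge(SL_n(R))\,Epin_{2n}(R)\,St(u_n)$. Given $v \in Um_n(R)$ with a section $w$, there is $g \in Spin_{2n}(R)$ with $g \cdot u_n = (v, w)$; writing $g = \wedge(\sigma)\,\epsilon\,s$ with $s \in St(u_n)$ and setting $\epsilon u_n = (v'', w'')$, we obtain $v = \sigma v''$ while Chintala's theorem places $v''$ in the $E_n(R)$-orbit of $e_n$. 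Hence $v$ is $SL_n(R)$-equivalent to $e_n$.

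The main obstacle is the reverse direction of the first part: the subtlety is to recognize that after replacing $g$ by $\wedge(\sigma)^{-1} g$ one obtains a Spin element whose action on $(x,y)$ fixes the first coordinate, which is precisely the situation where Chintala's theorem can be invoked to absorb the remaining discrepancy into $Epin_{2n}(R)$. The rest of the argument is essentially bookkeeping, provided one carefully tracks how $\wedge(\sigma)$ acts via $H(\sigma)$ on unit vectors.
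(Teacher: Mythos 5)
Your proposal is correct and follows essentially the same argument as the paper: the forward direction decomposes $g=\wedge(\sigma)\epsilon s$ and uses that $Epin_{2n}(R)$ preserves $E_n(R)$-orbits of the first coordinate, and the converse uses injectivity to produce $\sigma$, Chintala's transitivity on sections with fixed first coordinate to produce $\epsilon$, and observes the remainder lies in $St(x,y)$; the treatment of the second statement likewise matches the paper's (sketched) reduction to the vector $u_n$.
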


\begin{Rem}\label{RemarkPermutingGroups}
Let $G$ be a group. For two subgroups $M,N \subset G$, let $MN = \{mn | m \in M, n \in N\}$ and $NM = \{nm|n \in N, m \in M\}$. Recall that $M$ and $N$ are called permuting if $MN = NM$; this is the case if and only if the set $MN$ is a subgroup of $G$. Now recall that $Epin_{2n}(R)$ is a normal subgroup of $Spin_{2n}(R)$ for $n \geq 3$: Indeed, $EO_{2n}(R)$ is a normal subgroup of $SO_{2n}(R)$ (cf. \cite[Theorem 2.12]{SK}) and $Epin_{2n}(R)$ is the preimage of $EO_{2n}(R)$ under the homomorphism $\pi$ above (cf. \cite[Remarks after Proposition 4.3.3]{B}). It follows that $Epin_{2n}(R)$ permutes with any subgroup of $Spin_{2n}(R)$ and, moreover, $Spin_{2n}(R)/Epin_{2n}(R)$ is a group. We denote by $\overline{\wedge SL_{n}(R)}$ and $\overline{St(x,y)}$ the images of $\wedge (SL_{n}(R))$ and $St(x,y)$ under the canonical projection $Spin_{2n}(R) \rightarrow Spin_{2n}(R)/Epin_{2n}(R)$ respectively. The criterion in Theorem \ref{Spin-criterion} above essentially means that $Spin_{2n}(R)/Epin_{2n}(R)= \overline{\wedge SL_{n}(R)}~\overline{St(x,y)}$ for any $(x,y) \in U_{2n-1}(R)$. In particular, it follows that the criterion in the theorem is equivalent to the equality $Spin_{2n}(R) = St(x,y) Epin_{2n}(R) \wedge (SL_{n}(R))$ for any $(x,y) \in U_{2n-1}(R)$. Furthermore, it follows completely analogously that the equality $SO_{2n}(R) = H(SL_{n}(R)) EO_{2n}(R)St (x,y)$ in Theorem \ref{SO-criterion} below is equivalent to the equality $SO_{2n}(R) = St(x,y) EO_{2n}(R) H(SL_{n}(R))$.
\end{Rem}

\begin{proof}
Let $\varphi \in Spin_{2n}(R)$ such that $(v,w) =  \varphi \cdot (x,y)$. Note that elements of $St (x,y)$ do not change the right-hand vector $(x,y)$ by definition. Elements of $Epin_{2n}(R)$ will not change the $E_{n}(R)$-orbits of the unimodular vector $x$ in question because of \cite[Theorem 4.4]{C2}. Hence if one has $Spin_{2n}(R) =  \wedge (SL_{n}(R)) Epin_{2n}(R) St (x,y)$, then $v$ will be in the same $SL_{n}(R)$-orbit as $x$.\\
Conversely, let us fix $(x,y) \in U_{2n-1}(R)$. Assume that $(v,w) \sim_{Spin_{2n}(R)} (x,y)$ always implies $v \sim_{SL_{n}(R)} x$ and let $\varphi \in Spin_{2n}(R)$. Let $(v,w) =  \varphi \cdot (x,y)$. Then there is $\varphi' \in SL_{n}(R)$ such that $\wedge(\varphi') \cdot (v,w)= (\varphi' v, {{\varphi'}^{t}}^{-1} w) = (x, y')$ for some $y'$. Furthermore, there is $\varphi'' \in Epin_{2n}(R)$ with $\varphi'' \cdot (x,y')=(x,y)$ by \cite[Theorem 4.3]{C2}. But then $\varphi'' \wedge(\varphi') \varphi \in St (x,y)$. So $\varphi \in \wedge (SL_{n}(R)) Epin_{2n}(R)St (x,y)$ and, for any $(x,y) \in U_{2n-1}(R)$, $Spin_{2n}(R) = \wedge (SL_{n}(R)) Epin_{2n}(R)St (x,y)$. This proves the first statement in the theorem.\\
The second statement in the theorem follows easily from the first paragraph of this proof by realizing that one only has to consider the vector $u_n$ in order to show that $Um_{n}(R)/SL_{n}(R)$ is trivial if $U_{2n-1}(R)/Spin_{2n}(R)$ is trivial.
\end{proof}

Note that $St (u_{n})$ is just $Spin_{2n-1}(R)$. Of course, the criterion in the theorem is automatically satisfied whenever the orbit space $Um_{n}(R)/SL_{n}(R)$ is trivial. Later we will be able to give more examples of rings for which the criterion in Theorem \ref{Spin-criterion} holds.\\
Analogously, we now investigate the injectivity of the comparison map
\begin{center}
$Um_{n}(R)/SL_{n}(R) \rightarrow U_{2n-1}(R)/SO_{2n}(R)$.
\end{center}
By abuse of notation, let us also denote by $St (x,y)$ the subgroup of $SO_{2n}(R)$ which stabilizes $(x,y) \in U_{2n-1}(R)$; note that $St (u_{n})$ is just $SO_{2n-1}(R)$. We prove the following criterion for the equality of $SL_{n}(R)$-orbits of unimodular vectors and $SO_{2n}(R)$-orbits of unit vectors:

\begin{Thm}\label{SO-criterion}
The natural map $Um_{n}(R)/SL_{n}(R) \rightarrow U_{2n-1}(R)/SO_{2n}(R)$ is injective (and hence bijective) if and only if, for any $(x,y) \in U_{2n-1}(R)$, one has $SO_{2n}(R) = H(SL_{n}(R)) EO_{2n}(R)St (x,y)$. Furthermore, $Um_{n}(R)/SL_{n}(R)$ is trivial if and only if the orbit space $U_{2n-1}(R)/SO_{2n}(R)$ is trivial and $SO_{2n}(R) = H(SL_{n}(R)) EO_{2n}(R) St (u_{n})$, where $u_{n} = (e_{n},e_{n})$ and $e_{n}= {(1,0,...,0)}^{t} \in R^n$.
\end{Thm}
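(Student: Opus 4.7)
The plan is to carry out a proof completely parallel to that of Theorem \ref{Spin-criterion}, with $(Spin_{2n}(R), Epin_{2n}(R), \wedge)$ replaced throughout by $(SO_{2n}(R), EO_{2n}(R), H)$. The two structural inputs required are the $SO$-analogues of those used above: first, that $EO_{2n}(R)$ preserves the $E_{n}(R)$-orbit of the left-hand unimodular vector (obtained by pushing the $Epin$-statement from \cite[Theorem 4.4]{C2} forward along $\pi$), and second, that for any fixed unimodular $x$ and any two sections $y, y'$ of $x$ there exists $\varepsilon \in EO_{2n}(R)$ with $\varepsilon \cdot (x, y) = (x, y')$ (obtained likewise from \cite[Theorem 4.3]{C2}).

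For the ``if'' direction, given $\varphi \in SO_{2n}(R)$ with $(v,w) = \varphi \cdot (x,y)$, I would factor $\varphi = H(\sigma)\,\varepsilon\, s$ with $\sigma \in SL_n(R)$, $\varepsilon \in EO_{2n}(R)$, and $s \in St(x,y)$. Reading the factorization right-to-left: $s$ fixes $(x,y)$ by definition, $\varepsilon$ preserves the $E_n(R)$-orbit of the left coordinate, and $H(\sigma)$ subsequently multiplies that coordinate by $\sigma$; therefore $v \sim_{SL_n(R)} x$, which is exactly what one needs.

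For the ``only if'' direction, fix $(x,y) \in U_{2n-1}(R)$ and an arbitrary $\varphi \in SO_{2n}(R)$, and set $(v,w) = \varphi \cdot (x,y)$. The injectivity hypothesis provides $\sigma \in SL_n(R)$ with $\sigma v = x$, so $H(\sigma) \cdot (v,w) = (x, y')$ where $y' = {(\sigma^{t})}^{-1} w$ is a section of $x$. The second input above then supplies $\varepsilon \in EO_{2n}(R)$ with $\varepsilon \cdot (x, y') = (x, y)$, so that $\varepsilon\, H(\sigma)\, \varphi \in St(x,y)$; solving for $\varphi$ gives the required factorization. The second statement of the theorem then drops out of the first paragraph of this argument combined with the surjectivity of the comparison map, since one only needs to verify the criterion at the base point $u_n$ in order to conclude triviality of $Um_n(R)/SL_n(R)$ from triviality of $U_{2n-1}(R)/SO_{2n}(R)$.

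I do not expect a genuine obstacle; the argument is purely formal, and the one step requiring mild care is the transfer of the $Epin$-level facts of \cite{C2} to the $EO$-level, which is immediate via $\pi$ and is already implicit in Remark \ref{RemarkPermutingGroups}.
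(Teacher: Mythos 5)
Your argument is correct and is essentially identical to the paper's own proof: the same factorization $\varphi'' H(\varphi')\varphi \in St(x,y)$ for the ``only if'' direction, the same right-to-left reading of $H(SL_{n}(R))\,EO_{2n}(R)\,St(x,y)$ for the ``if'' direction, and the same appeals to \cite[Theorems 4.3 and 4.4]{C2} for the two structural inputs. No further comment is needed.
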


\begin{proof}
The proof is very similar to the proof of Theorem \ref{Spin-criterion} above, but for the reader's convenience we still give a full proof: Let $\varphi \in SO_{2n}(R)$ such that $(v,w) = \varphi \cdot (x,y)$. Again, note that elements of $St (x,y)$ do not change the right-hand vector $(x,y)$ by definition. Furthermore, elements of $EO_{2n}(R)$ will not change the $E_{n}(R)$-orbits of the corresponding unimodular vector $x$ because of \cite[Theorem 4.4]{C2} again. Thus, if $SO_{2n}(R) = H(SL_{n}(R)) EO_{2n}(R)St (x,y)$, then $v$ lies in the same $SL_{n}(R)$-orbit as $x$.\\
Conversely, we fix $(x,y) \in U_{2n-1}(R)$. Assume that $(v,w) \sim_{SO_{2n}(R)} (x,y)$ always implies $v \sim_{SL_{n}(R)} x$ and let $\varphi \in SO_{2n}(R)$. Let $(v,w) = \varphi \cdot (x,y)$. By assumption, there is $\varphi' \in H(SL_{n}(R))$ such that $H(\varphi') \cdot (v,w) = (\varphi' v, {{\varphi'}^{t}}^{-1} w) = (x, y')$ for some $y'$. By \cite[Theorem 4.3]{C2}, there is $\varphi'' \in EO_{2n}(R)$ with $\varphi'' \cdot (x,y') =(x,y)$. But then $\varphi''H(\varphi')\varphi \in St (x,y)$. So $\varphi \in H(SL_{n}(R)) EO_{2n}(R)St (x,y)$ and, for any $(x,y) \in U_{2n-1}(R)$, $SO_{2n}(R) = H(SL_{n}(R)) EO_{2n}(R) St (x,y)$. This proves the first statement in the theorem.\\
Again, the second statement in the theorem follows from the first paragraph by realizing that one only has to consider the vector $u_n$ in order to show that $Um_{n}(R)/SL_{n}(R)$ is trivial in case $U_{2n-1}(R)/SO_{2n}(R)$ is trivial.
\end{proof}

\subsection{Degree maps}\label{2.2}

In this section, we introduce the degree maps already studied in \cite{S} and \cite[Section 3]{AF2} and discuss their relationship with Spin-orbits of unit vectors.\\
For all $n \in \mathbb{N}$, we denote by $Sp_{2n}(R)$ the group of symplectic matrices of rank $2n$, by $O_{2n}(R)$ the group of orthogonal matrices of rank $2n$, by $S_{2n}(R)$ the set of invertible symmetric matrices of rank $2n$, by $A_{2n}(R)$ the set of invertible alternating matrices of rank $2n$ and by $\tilde{A}_{2n}(R)$ its subset of invertible alternating matrices of rank $2n$ with Pfaffian $1$. The group $GL_{2n}(R)$ acts on the set $S_{2n}(R)$ by $M \mapsto \varphi^{t} M \varphi$ for $\varphi \in GL_{2n}(R)$ and $M \in S_{2n}(R)$. Similarly, $GL_{2n}(R)$ acts on the set $A_{2n}(R)$ by $M \mapsto \varphi^{t} M \varphi$ for $\varphi \in GL_{2n}(R)$ and $M \in A_{2n}(R)$; this action also induces an action of $SL_{2n}(R)$ on $\tilde{A}_{2n}(R)$. We denote by $S_{2n}(R)/GL_{2n}(R)$, $A_{2n}(R)/GL_{2n}(R)$ and $\tilde{A}_{2n}(R)/SL_{2n}(R)$ the corresponding orbit spaces.\\
For $m,n \geq 1$, there are embeddings $S_{2n}(R) \rightarrow S_{2n+2m}(R), M \mapsto M \perp \sigma_{2m}$, and $A_{2n}(R) \rightarrow A_{2n+2m}(R), M \mapsto M \perp \psi_{2m}$, where the matrices $\sigma_{2m}$ and $\psi_{2m}$ are defined inductively by
\begin{center}
$\sigma_2 =
\begin{pmatrix}
0 & 1 \\
 1 & 0
\end{pmatrix},~
\psi_2 =
\begin{pmatrix}
0 & 1 \\
- 1 & 0
\end{pmatrix}
$
\end{center}
and $\sigma_{2m} = \sigma_{2m-2} \perp \sigma_{2}$ and $\psi_{2m} = \psi_{2m-2} \perp \psi_{2}$ for $m \geq 2$. We denote by $S(R) = colim_n S_{2n}(R)$ and $A(R) = colim_n A_{2n}(R)$ the direct limits under the respective embeddings.\\
Now we discuss an equivalence relation on the set $S(R)$: Two matrices $M \in S_{2m}(R)$ and $N \in S_{2n}(R)$ are said to be equivalent, $M \sim N$, if there exists $i \geq 1$ and $E \in E_{2n+2m+2i}(R)$ with
\begin{center}
$M \perp \sigma_{2n+2i} = E^t (N \perp \sigma_{2m+2i}) E$.
\end{center}
It follows from \cite[Lemme 4.5.1.9]{BL} that the block sum of matrices equips the set of equivalence classes $S(R)/{\sim}$ with the structure of an abelian group. We also refer the reader to \cite[Section 2]{AF2} for a discussion of the group $S(R)/{\sim}$.\\
Analogously, we have an equivalence relation on the set $A(R)$: Two matrices $M \in A_{2m}(R)$ and $N \in A_{2n}(R)$ are said to be equivalent, $M \sim N$, if there exists $i \geq 1$ and $E \in E_{2n+2m+2i}(R)$ with
\begin{center}
$M \perp \psi_{2n+2i} = E^t (N \perp \psi_{2m+2i}) E$.
\end{center}
Then it follows from \cite[\S 3]{SV} that the block sum of matrices equips the set of equivalence classes $W'_E (R) := A(R)/{\sim}$ with the structure of an abelian group. One can also consider the direct limit $\tilde{A}(R) = colim_n \tilde{A}_{2n}(R)$ with respect to the induced embeddings $\tilde{A}_{2n}(R) \rightarrow \tilde{A}_{2n+2m}(R), M \mapsto M \perp \psi_{2m}$; then it follows again from \cite[\S 3]{SV} that the set of equivalence classes $W_E (R) := A (R)/{\sim}$ is an abelian group. This group is also called the elementary symplectic Witt group and clearly is a subgroup of $W'_E (R)$; we refer the reader to \cite[Section 3.A]{Sy1} for a discussion of the groups $W'_E (R)$ and $W_E (R)$.\\
One can also define a quotient of $W_E (R)$ which usually denoted $W_{SL}(R)$ as follows: In this case two matrices $M \in \tilde{A}_{2m}(R)$ and $N \in \tilde{A}_{2n}(R)$ are said to be equivalent, $M \sim_{SL} N$, if there exists $i \geq 1$ and $\varphi \in SL_{2n+2m+2i}(R)$ with
\begin{center}
$M \perp \psi_{2n+2i} = \varphi^t (N \perp \psi_{2m+2i}) \varphi$.
\end{center}
It follows again from \cite[\S 3]{SV} that the block sum of matrices equips the set of equivalence classes $W_{SL} (R) := \tilde{A}(R)/{\sim_{SL}}$ with the structure of an abelian group. We also refer the reader to \cite[Section 2.B]{Sy2} for a detailed discussion of this group.\\
If $R$ is a smooth affine algebra over a field $k$ with $char(k) \neq 2$, then it is argued in \cite[Section 2.3]{AF2} that the abelian group $S(R)/{\sim}$ can be identified with the higher Grothendieck-Witt group $GW_{1}^{1}(R)$; furthermore, the group $W'_E (R)$ can be identified with the higher Grothendieck-Witt group $GW_{1}^{3}(R)$. We refer the reader to \cite[Section 2]{AF2} for a detailed discussion of the higher Grothendieck-Witt groups $GW_{i}^{j}(R)$ for $i,j \in \mathbb{Z}$. For the purpose of this paper, we simply mention that there exist exact Karoubi periodicity sequences
\begin{center}
$K_{1}(R) \xrightarrow{H_{1,1}} GW_{1}^{1}(R) \xrightarrow{\eta} GW_{0}^{0}(R) \xrightarrow{f_{0,0}} K_{0}(R)$
\end{center}
and
\begin{center}
$K_{1}(R) \xrightarrow{H_{1,3}} GW_{1}^{3}(R) \xrightarrow{\eta} GW_{0}^{2}(R) \xrightarrow{f_{0,2}} K_{0}(R)$.
\end{center}
The groups $K_i (R)$, $i=0,1$, are the usual algebraic $K$-theory groups; the group $GW_{0}^{0} (R) = GW(R)$ is the Grothendieck-Witt group of non-degenerate symmetric bilinear forms over $R$, while the group $GW_{0}^{2}(R) = KSp_{0}(R)$ is the $0$th symplectic $K$-theory group (of non-degenerate alternating bilinear forms). The homomorphisms $f_{0,0}$ and $f_{2,0}$ are the usual forgetful homomorphisms, while the homomorphisms $H_{1,1}$ and $H_{1,3}$ are called hyperbolic homomorphisms. The hyperbolic homomorphism $H_{1,1}$ is defined by $M \mapsto M^t \sigma_{2n} M \in S_{2n}(R)$ for $M \in GL_{2n}(R)$, while $H_{1,3}$ is defined by $M \mapsto M^t \psi_{2n} M \in A_{2n}(R)$ for $M \in GL_{2n}(R)$. Following \cite[Section 2.4]{AF2}, the homomorphism $\eta$ in each sequence is defined by $M \mapsto [M] - [\sigma_{2n}] \in GW (R)$ for $M \in S_{2n}(R)$ and respectively by $M \mapsto [M] - [\psi_{2n}] \in KSp_{0}(R)$ for $M \in A_{2n}(R)$; as the definitions of the forgetful homomorphisms and the homomorphisms $H_{1,1}$, $H_{1,3}$ and $\eta$ make sense over any commutative ring $R$ with $2 \in R^{\times}$, we will use the notation for these homomorphisms in this general case as well. Finally, the group $W_{SL}(R)$ can be identifed with the cokernel of $H_{1,3}$ and the homomorphism $\eta$ identifies $W_{SL}(R)$ with the kernel of the forgetful homomorphism $KSp_{0} (R) \rightarrow K_{0}(R)$.\\
Now let $R$ be any commutative ring with $2 \in R^{\times}$ again. The $2n \times 2n$-matrices $\tau_{2n}$ over $R$ are defined inductively by
\begin{center}
$\tau_2 =
\begin{pmatrix}
0 & 1 \\
0 & 0
\end{pmatrix}$
\end{center}
and $\tau_{2n} = \tau_{2n-2} \perp \tau_{2}$ for $n \geq 2$. One also defines matrices $E_n \in GL_{2^{n-1}}(R)$ inductively by $E_n = id_{2^{n-1}}$ for $n =1,2$ and by
\begin{center}
\begin{equation*}
E_{n}:=
\begin{cases}
\begin{pmatrix}
1 & 0 \\
0 & J_{n-1}^{t}
\end{pmatrix}
\begin{pmatrix}
1 & 0 \\
\tau_{2^{n-2}} & 1
\end{pmatrix}
\begin{pmatrix}
1 & -\sigma_{2^{n-2}} \\
0 & 1
\end{pmatrix}
\begin{pmatrix}
1 & 0 \\
0 & \psi_{2^{n-2}}
\end{pmatrix}  & \text{if } n \equiv 0 \text{ mod } 4 \\
\begin{pmatrix}
E_{n-1} & 0 \\
0 & E_{n-1}
\end{pmatrix}
\begin{pmatrix}
1 & 0 \\
0 & \psi_{2^{n-2}}
\end{pmatrix}  & \text{if } n \equiv 1 \text{ mod } 4 \\
\begin{pmatrix}
1 & 0 \\
0 & J_{n-1}^{t}
\end{pmatrix}
\begin{pmatrix}
1 & 0 \\
\tau_{2^{n-2}} & 1
\end{pmatrix}
\begin{pmatrix}
1 & \psi_{2^{n-2}} \\
0 & 1
\end{pmatrix}
\begin{pmatrix}
1 & 0 \\
0 & \sigma_{2^{n-2}}
\end{pmatrix}  & \text{if } n \equiv 2 \text{ mod } 4 \\
\begin{pmatrix}
E_{n-1} & 0 \\
0 & E_{n-1}
\end{pmatrix}
\begin{pmatrix}
1 & 0 \\
0 & \sigma_{2^{n-2}}
\end{pmatrix}  & \text{if } n \equiv 3 \text{ mod } 4
   \end{cases}
\end{equation*}
\end{center}
for $n \geq 3$. For any $(v,w) \in U_{2n-1}(R)$, it follows from \cite[Lemma 5.3]{S} and \cite[Lemma 3.3.1]{AF2} that the matrices $E^{-1}_n \alpha_{n}^{t}(v,w) E_{n}$ are orthogonal if $n \equiv 0~mod~4$ and symplectic if $n \equiv 2~mod~4$ and that the matrices $E^{t}_n \alpha_{n}(v,w) J_{n} E_{n}$ are invertible symmetric if $n \equiv 1~mod~4$ and invertible alternating if $n \equiv 3~mod~4$.\\
Hence defining
\begin{center}
\begin{equation*}
\Psi_{n}(v,w):=
\begin{cases}
E^{-1}_n \alpha_{n}^{t}(v,w) E_{n} & \text{if n is even}\\
E^{t}_n \alpha_{n}(v,w) J_{n} E_{n} & \text{if n is odd}
\end{cases}
\end{equation*}
\end{center}
for any $(v,w) \in U_{2n-1}(R)$ defines a map
\begin{center}
\begin{equation*}
   \Psi_{n}: U_{2n-1}(R) \rightarrow
   \begin{cases}
     O_{2^{n-1}}(R) & \text{if } n \equiv 0 \text{ mod } 4 \\
     S_{2^{n-1}}(R) & \text{if } n \equiv 1 \text{ mod } 4 \\
     Sp_{2^{n-1}}(R) & \text{if } n \equiv 2 \text{ mod } 4 \\
     A_{2^{n-1}}(R) & \text{if } n \equiv 3 \text{ mod } 4
   \end{cases}
\end{equation*}
\end{center}
which we will call the degree map (see also \cite[Section 3.3]{AF2}).\\
Finally, let us now return to the study of Spin-orbits of unit vectors: If $n \geq 3$ is odd, the action of $Spin_{2n}(R)$ on $U_{2n-1}(R)$ can be described as follows (cf. \cite[Section 4]{C1}): Any $\varphi \in Spin_{2n}(R)$ has the form $\varphi = diag (g,{g^{\ast}}^{-1})$ for $g \in GL_{2^{n-1}}(R)$ under the identification $Cl (V,q) \cong M_{2^n}(R)$ (cf. \cite[Theorem 4.3]{C1}); here $g^{\ast}$ denotes the standard involution of $g \in M_{2^{n-1}}(R)$. If $(v,w) \in U_{2n-1} (R)$, then the matrix $g \alpha_{n}(v,w) g^{\ast}$ is again a Suslin matrix of some unit vectors, i.e., $g \alpha_{n}(v,w) g^{\ast} = \alpha_{n}(v',w')$ for some $(v',w') \in U_{2n-1}(R)$, and this is the action of $\varphi$ on $(v,w)$ (cf. \cite[Section 4.1]{C1}).\\
Moreover, if we associate to any Suslin matrix $\alpha_{n}(v,w)$ the matrix

\begin{center}
$\Psi_{n}(v,w) = E_{n}^{t} \alpha_{n}(v,w)J_{n}E_{n}$
\end{center}

from the definition of the degree map, this gives a bijection from Suslin matrices to matrices of the form $\Psi_{n}(v,w)$; in this sense, no information is lost from the Suslin matrix construction. Indeed, the map $\Psi_n$ is obtained from the Suslin matrix map $\alpha_{n}: U_{2n-1}(R) \rightarrow GL_{2^{n-1}}(R)$ by considering the composite of $\alpha_{n}$ and the bijection $GL_{2^{n-1}}(R) \rightarrow GL_{2^{n-1}}(R), M \mapsto E_n^t M J_n E_n$ and realizing that the image of this composite lies in fact in $S_{2^{n-1}}(R)$ if $n \equiv 1~mod~4$ or in $A_{2^{n-1}}(R)$ if $n \equiv 3~mod~4$. Furthermore, note that $\Psi_{n} (e_{n},e_{n}) = \sigma_{2^{n-1}}$ if $n \equiv 1~mod~4$ and that $\Psi_{n} (e_{n},e_{n}) = \psi_{2^{n-1}}$ if $n \equiv 3~mod~4$ (cf. \cite[Lemma 3.3.1]{AF2}).\\
The action of $Spin_{2n}(R)$ translates to

\begin{center}
$\Psi_{n}(v',w') = E_{n}^{t} g \alpha_{n}(v,w)g^{\ast} J_{n} E_{n}=E_{n}^{t} g \alpha_{n}(v,w)J_{n} g^{t}J_{n}^{t} J_{n} E_{n}=E_{n}^{t} g \alpha_{n}(v,w)J_{n} g^{t} E_{n}=(E_{n}^{t} g {E_{n}^{t}}^{-1})(E_{n}^{t} \alpha_{n}(v,w)J_{n}E_{n})(E_{n}^{-1} g^{t} E_{n})=g' \Psi_{n}(v,w) {g'}^{t}$,
\end{center}

where $(v',w')$ is again the action of $\varphi = diag(g,{g^{\ast}}^{-1})$ on $(v,w)$ and $g' = E_{n}^{t} g {E_{n}^{t}}^{-1}$. This description has some consequences that we now explain:\\
Let us first explain why the determinant of $\Psi_n (v,w)$ is $1$ if $n \equiv 1~mod~4$ and that the Pfaffian of $\Psi_n (v,w)$ is $1$ if $n \equiv 3~mod~4$; note that it suffices to check this locally, i.e., after localization at any prime ideal. But over a local ring every given unimodular vector $(v,w)$ is completable to a matrix in $E_n (R)$ and the homomorphism $\wedge: SL_n (R) \rightarrow Spin_{2n}(R)$ always maps $E_n (R)$ into $Epin_{2n}(R)$, so we can assume that some $\varphi=(g,g^{\ast}) \in Epin_{2n}(R)$ transforms $(v,w)$ into $u_n = (e_n,e_n)$; one can then check easily via the explicit generators of $Epin_{2n}(R)$ given at the end of \cite[Section 3.3]{C2} that the determinant of $g$ has to be $1$.\\
In particular, using the computation of $\Psi_{n}(v',w')$ above for $(v',w')=u_n$, it follows locally and then also over any commutative ring $R$ that the determinant of $\Psi_n (v,w)$ is indeed $1$ if $n \equiv 1~mod~4$ and that the Pfaffian of $\Psi_n (v,w)$ is $1$ if $n \equiv 3~mod~4$, as claimed. For any commutative ring $R$ and $\varphi = diag(g,{g^{\ast}}^{-1}) \in Spin_{2n}(R)$, it then follows again from the computation above that the determinant of $g'$ and hence of $g$ is a square root of $1$ if $n \equiv 1~mod~4$ and equal to $1$ if $n \equiv 3~mod~4$.\\
By composition we obtain induced maps
\begin{center}
$Um_{n}(R)/SL_{n}(R) \rightarrow U_{2n-1}(R)/Spin_{2n}(R) \rightarrow S_{2^{n-1}}(R)/GL_{2^{n-1}}(R) \rightarrow coker (K_{1}(R) \xrightarrow{H_{1,1}} S(R)/{\sim})$
\end{center}
for $n \equiv 1~mod~4$ and
\begin{center}
$Um_{n}(R)/SL_{n}(R) \rightarrow U_{2n-1}(R)/Spin_{2n}(R) \rightarrow \tilde{A}_{2^{n-1}}(R)/SL_{2^{n-1}}(R) \rightarrow coker (K_1(R) \xrightarrow{H_{1,3}} W'_E (R)) = W_{SL}(R)$
\end{center}
for $n \equiv 3~mod~4$. By abuse of notation, we call these composites again $\Psi_{n}$ or $\Psi_{n}$ modulo SL.\\
It also follows from the discussion above that the injective homomorphism $St (x,y) \rightarrow GL_{2^{n-1}}(R) \xrightarrow{\cong} GL_{2^{n-1}}(R), (g,{g^{\ast}}^{-1}) \mapsto g \mapsto g'=E_{n}^{t}g{E_{n}^{t}}^{-1}$ factors through $O(\Psi_{n}(x,y))$ if $n \equiv 1~mod~4$ and $Sp(\Psi_{n}(x,y))$ if $n \equiv 3~mod~4$.
\\
It is a natural question whether the map $\Psi_{n}$ is injective or, in other words, detects the non-triviality of $Um_{n}(R)/SL_{n}(R)$, i.e., $(v,w)$ and $(v',w')$ represent isomorphic stably free modules if and only if their images under $\Psi_{n}$ coincide.

\begin{Prop}\label{detect}
Let $R$ be a ring and $n \geq 3$ odd. If the degree map $\Psi_{n}$ modulo SL is injective,  then the natural map $Um_{n}(R)/SL_{n}(R) \rightarrow U_{2n-1}(R)/Spin_{2n}(R)$ above is bijective.
\end{Prop}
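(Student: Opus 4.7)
The plan is to reduce the statement to the purely formal observation that if a composition of maps is injective, then its first factor is injective. Surjectivity of the comparison map $Um_n(R)/SL_n(R) \to U_{2n-1}(R)/Spin_{2n}(R)$ is already automatic (cf. Section \ref{2.1}), so only injectivity needs to be addressed.

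The key step is to unwind the definition of $\Psi_n$ modulo SL recalled at the end of Section \ref{2.2}: by construction it is the composite
\begin{center}
$Um_n(R)/SL_n(R) \to U_{2n-1}(R)/Spin_{2n}(R) \to \tilde{A}_{2^{n-1}}(R)/SL_{2^{n-1}}(R) \to W_{SL}(R)$
\end{center}
when $n \equiv 3 \bmod 4$, and the analogous chain terminating in $\mathrm{coker}(K_1(R) \xrightarrow{H_{1,1}} S(R)/{\sim})$ when $n \equiv 1 \bmod 4$. What makes this composite well-defined is precisely the calculation carried out just above the proposition: the $Spin_{2n}(R)$-action on $U_{2n-1}(R)$ translates, under $\Psi_n$, into congruence $M \mapsto g' M (g')^t$ with $g' = E_n^t g {E_n^t}^{-1}$, and the determinant of $g'$ is shown there to equal $1$ for $n \equiv 3 \bmod 4$ and to be a square root of $1$ for $n \equiv 1 \bmod 4$. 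Thus conjugation by $g'$ descends to the $SL_{2^{n-1}}(R)$-orbit space of $\tilde{A}_{2^{n-1}}(R)$ in the alternating case and to the $GL_{2^{n-1}}(R)$-orbit space of $S_{2^{n-1}}(R)$ in the symmetric case, giving a well-defined middle arrow out of $U_{2n-1}(R)/Spin_{2n}(R)$.

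Once this factorization is in place the conclusion is immediate. Suppose $v, v' \in Um_n(R)$ admit sections $w, w'$ with $(v,w)$ and $(v',w')$ in the same $Spin_{2n}(R)$-orbit. Their images in $U_{2n-1}(R)/Spin_{2n}(R)$ then coincide, and so do their images under the further composite, i.e., under $\Psi_n$ modulo SL. The injectivity hypothesis on $\Psi_n$ modulo SL thus forces $[v] = [v']$ in $Um_n(R)/SL_n(R)$, which is exactly injectivity of the comparison map; together with surjectivity this yields bijectivity.

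I do not expect any genuine obstacle: the argument is essentially formal (if $g \circ f$ is injective then $f$ is injective) and all non-trivial content — the existence of the factorization and the determinant computation that makes the middle arrow land in an $SL_{2^{n-1}}$- or $GL_{2^{n-1}}$-orbit space rather than a larger quotient — has already been carried out in Section \ref{2.2}. The only sanity check worth performing is to confirm that the factorization of $\Psi_n$ modulo SL genuinely passes through the comparison map (and not just through $U_{2n-1}(R)/Spin_{2n}(R)$ via some other route), which is evident from the very way the composite is assembled.
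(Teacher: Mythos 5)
Your proof is correct and follows exactly the paper's route: the paper's proof is precisely the observation that $\Psi_n$ modulo SL factors through the comparison map $Um_n(R)/SL_n(R) \rightarrow U_{2n-1}(R)/Spin_{2n}(R)$ (which is automatically surjective), so injectivity of the composite forces injectivity of the first factor. Your extra remarks on the determinant computation ensuring well-definedness of the middle arrow simply restate what Section \ref{2.2} already established, so nothing is missing or different in substance.
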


\begin{proof}
Follows directly from the factorizations above.
\end{proof}

\begin{Rem}\label{Vaserstein}
Recall from \cite{Sy1} and \cite{Sy2} that there is a generalized Vaserstein symbol $V_{\theta} : Um_3 (R)/SL_{3} (R) \rightarrow W_{SL}(R)$ modulo SL associated to any isomorphism $\theta: R \xrightarrow{\cong} \det (R^2)$ via the canonical identification $\tilde{\mathit{V}}_{SL} (\mathit{R}) \cong \mathit{W}_{\mathit{SL}} (\mathit{R})$ from \cite[Section 2.C]{Sy2}. If we let $\mathit{e}_{1} = {(1,0)}^{t}, \mathit{e}_{2} = {(0,1)}^{t} \in \mathit{R}^{2}$, then there is a canonical isomorphism $\theta: \mathit{R} \xrightarrow{\cong} \det(\mathit{R}^{2})$ given by $1 \mapsto \mathit{e}_{1} \wedge \mathit{e}_{2}$. The map $\Psi_{3}$ modulo SL coincides with the Vaserstein symbol modulo SL associated to $\theta$. However, the map $\Psi_{3}$ modulo SL does not coincide with the original Vaserstein symbol considered in \cite[\S 5]{SV}; as a matter of fact, the original Vaserstein symbol coincides with the generalized Vaserstein symbol associated to $-\theta$ via the canonical identification $\tilde{\mathit{V}}_{SL} (\mathit{R}) \cong \mathit{W}_{\mathit{SL}} (\mathit{R})$ (cf. \cite[Section 4.B]{Sy1}). Altogether, it follows from Proposition \ref{detect} that the comparison map $Um_{3}(R)/SL_{3}(R) \rightarrow U_{5}(R)/Spin_{6}(R)$ is bijective whenever the generalized Vaserstein symbol modulo SL associated to the isomorphism $\theta$ above is injective.\\
Furthermore, the assignment $(g,{g^{\ast}}^{-1}) \mapsto g$ induces  isomorphisms $Spin_6 (R) \cong SL_4 (R)$ and $Epin_6 (R) \cong E_4 (R)$ (cf. \cite[Theorems 6.1 and 7.5]{C1}); in particular, it follows that the assignment $(g,{g^{\ast}}^{-1}) \mapsto E_{3}^{t} g {E_{3}^{t}}^{-1}$ also induces an isomorphism $Spin_6 (R) \cong SL_4 (R)$. Furthermore, one can check easily that any matrix in $\tilde{A}_{4}(R)$ is of the form $\Psi_{3}(v,w)$ for some $(v,w) \in U_{5}(R)$ and therefore the assignment $(v,w) \mapsto \Psi_3 (v,w)$ induces a bijection $U_5 (R) \cong \tilde{A}_4 (R)$. Hence it follows that the map $U_{5}(R)/Spin_{6}(R) \rightarrow \tilde{A}_{4}(R)/SL_{4}(R)$ is bijective and the discussion above shows that $St (x,y)$ corresponds to $Sp (\chi)$ with $\chi = \Psi_{3} (x,y)$ under the isomorphism $Spin_{6}(R) \rightarrow SL_{4}(R), (g,{g^{\ast}}^{-1}) \mapsto E_{3}^{t} g {E_{3}^{t}}^{-1}$.
\end{Rem}


If $n$ is even, the action of $Spin_{2n}(R)$ on $U_{2n-1}(R)$ can be described as follows (cf. \cite[Section 5]{C1}): Any $\varphi \in Spin_{2n}(R)$ has the form $\varphi = diag (\varphi_{1},\varphi_{2})$ for $\varphi_{1},\varphi_{2} \in GL_{2^{n-1}}(R)$ under the identification $Cl (V,q) \cong M_{2^n}(R)$. Furthermore, both $\varphi_{1}$ and $\varphi_{2}$ satisfy $\varphi_{i}^{\ast} = \varphi_{i}^{-1}$ for $i=1,2$; it follows from this that $E_{n}^{-1} \varphi_{i} E_{n} \in O_{2^{n-1}}(R)$ if $n \equiv 0~mod~4$ and $E_{n}^{-1} \varphi_{i} E_{n} \in Sp_{2^{n-1}}(R)$ if $n \equiv 2~mod~4$, because the equality $\varphi_{i}^{\ast} = J_{n} \varphi_{i}^{t} J_{n}^{t}$ implies
\begin{center}
$J_{n}^{t} = \varphi_{i}^t J_{n}^{t} \varphi_{i}$.
\end{center}
Using $E_{n}^t J_n^t E_n = \sigma_{2^{n-1}}$ if $n \equiv 0~mod~4$ and $E_{n}^t J_n^t E_n = -\psi_{2^{n-1}}$ if $n \equiv 2~mod~4$ (cf. \cite[Lemma 3.3.1]{AF2}), one obtains the statement.\\
If $(v,w) \in U_{2n-1} (R)$, then the matrix $\varphi_{1} \alpha_{n}(v,w) \varphi_{2}^{-1}$ is again a Suslin matrix of some unit vectors, i.e., $\varphi_{1} \alpha_{n}(v,w) \varphi_{2}^{-1} = \alpha_{n}(v',w')$ for some $(v',w') \in U_{2n-1}(R)$, and this is essentially the action of $\varphi$ on $(v,w)$.\\
Moreover, if we associate to any Suslin matrix $\alpha_{n}(v,w)$ the matrix

\begin{center}
$\Psi_{n}(v,w) = E_{n}^{-1} \alpha_{n}^{t}(v,w)E_{n}$,
\end{center}

from the definition of the degree map, this gives a bijection from Suslin matrices to matrices of the form $\Psi_{n}(v,w)$. The action of $Spin_{2n}(R)$ translates to
\begin{center}
$\Psi_{n}(v',w') = E_{n}^{-1} \varphi_{2}^{-t} \alpha_{n}^{t}(v,w) \varphi_{1}^{t} E_{n} = (E_{n}^{-1} \varphi_{2}^{-t} E_{n}) (E_{n}^{-1} \alpha_{n}^{t}(v,w) E_{n}) (E_{n}^{-1} \varphi_{1}^{t} E_{n}) = (E_{n}^{-1} \varphi_{2}^{-t} E_{n}) \Psi_{n}(v,w) (E_{n}^{-1} \varphi_{1}^{t} E_{n})$.
\end{center}

It follows from the equality $\varphi_{1} \alpha_{n}(v,w) \varphi_{2}^{-1} = \alpha_{n}(v',w')$ that always $det(\varphi_{1})=det(\varphi_{2})$ and $det (\varphi)$ is a square; for $(v,w) = (e_{n}, e_{n})$, it follows that $\varphi_{1}\varphi_{2}^{-1}$ is a Suslin matrix of some unit vectors, which we denote by $\alpha (\varphi)$.\\ 
Altogether, we have an embedding $Spin_{2n}(R) \rightarrow O_{2^{n-1}}(R) \times O_{2^{n-1}}(R)$ if $n \equiv 0~mod~4$ and $Spin_{2n}(R) \rightarrow Sp_{2^{n-1}}(R) \times Sp_{2^{n-1}}(R)$ if $n \equiv 2~mod~4$. Furthermore, if $\varphi \in St(e_{n},e_{n})$ it follows that $\varphi_{1}=\varphi_{2}$ and hence we have an embedding $Spin_{2n-1}(R) \rightarrow O_{2^{n-1}}(R)$ if $n \equiv 0~mod~4$ and $Spin_{2n-1}(R) \rightarrow Sp_{2^{n-1}}(R)$ if $n \equiv 2~mod~4$. Unfortunately, if $n$ is even, it is not clear at all whether there exist analogues of the $\Psi_n$ modulo SL maps as in the odd case.

\subsection{Motivic homotopy theory}\label{2.3}

In this section we outline the construction of the unstable $\mathbb{A}^1$-homotopy category over a field given in \cite{MV}; for an introduction to model categories, we refer the reader to \cite{Ho}. The underlying idea of $\mathbb{A}^{1}$-homotopy theory is to develop a homotopy theory of schemes in which the affine line $\mathbb{A}^1$ plays the role of the unit interval $[0,1]$ in topology. So let $k$ be a fixed base field.\\
We denote by $Sm_k$ the category of smooth separated schemes of finite type over $k$ and then we consider the category $Spc_{k}=\Delta^{op}Shv_{Nis}(Sm_{k})$ of simplicial Nisnevich sheaves over $Sm_k$; we will also refer to simplicial Nisnevich sheaves over $Sm_k$ as spaces. Both the category $Sm_k$ and the category $\Delta^{op} Sets$ of simplicial sets can be embedded into $Spc_{k}$. Following \cite{MV}, there is a model structure on this category in which cofibrations are simply given by monomorphisms and weak equivalences are given by morphisms which induce weak equivalences of simplicials sets on stalks; this model structure is usually called the simplicial model structure and weak equivalences with respect to this model structure are also called simplicial weak equivalences. The $\mathbb{A}^1$-model structure is then obtained as a left Bousfield localization of the simplicial model structure with respect to the projection morphisms $\mathcal{X} \times \mathbb{A}^{1} \rightarrow \mathcal{X}$; its weak equivalences are called $\mathbb{A}^{1}$-weak equivalences. The associated homotopy category (which is obtained from $Spc_{k}$ by inverting $\mathbb{A}^{1}$-weak equivalences) is usually denoted $\mathcal{H} (k)$ and called the unstable $\mathbb{A}^{1}$-homotopy category over $k$. There is a pointed version of this category which is constructed completely analogously by considering the category $Spc_{k, \bullet}$ of pointed simplicial Nisnevich sheaves over $Sm_k$ (which are usually referred to as pointed spaces); it is denoted $\mathcal{H}_{\bullet} (k)$ and called the pointed unstable $\mathbb{A}^1$-homotopy category. The category $Spc_{k, \bullet}$ is a pointed model category and hence features the formalism of fiber and cofiber sequences (cf. \cite[Chapter 6]{Ho}).\\
As in topology, there is a smash product $(\mathcal{X},x) \wedge (\mathcal{Y},y)$ for two pointed spaces $(\mathcal{X},x)$ and $(\mathcal{Y},y)$. The simplicial suspension functor is defined as the functor $\Sigma_{s} = (S^1,\ast) \wedge -: Spc_{k, \bullet} \rightarrow Spc_{k, \bullet}$; it has a right adjoint $\Omega_{s}: Spc_{k, \bullet} \rightarrow Spc_{k, \bullet}$ called the loop space functor. This adjoint pair of functors forms a Quillen adjunction.\\
For two spaces $\mathcal{X},\mathcal{Y}$, we denote by $[\mathcal{X},\mathcal{Y}]_{\mathcal{H}(k)}$ the set of morphisms from $\mathcal{X}$ to $\mathcal{Y}$ in the category $\mathcal{H}(k)$; similarly, for two pointed spaces $(\mathcal{X},x),(\mathcal{Y},y)$, we denote by $[(\mathcal{X},x),(\mathcal{Y},y)]_{\mathcal{H}_{\bullet}(k)}$ the set of morphisms from $(\mathcal{X},x)$ to $(\mathcal{Y},y)$ in the pointed category $\mathcal{H}_{\bullet}(k)$.\\
Now let $n \geq 1$. Then we define $S_{2n-1} = k[x_{1},...,x_{n},y_{1},...,y_{n}]/\langle \sum_{i=1}^{n} x_{i}y_{i} - 1 \rangle$ and let $Q_{2n-1} = Spec(S_{2n-1})$ be the smooth affine hypersurface in $\mathbb{A}^{2n}$. The morphism $p_{2n-1}:Q_{2n-1} \rightarrow \mathbb{A}^{n}\setminus 0$ induced by projection on the coefficients $x_{1},...,x_{n}$ is locally trivial with fibers isomorphic to $\mathbb{A}^{n-1}$ and therefore an $\mathbb{A}^{1}$-weak equivalence (cf. \cite{AF1}, \cite{AF2}). In particular, we conclude that there is a pointed $\mathbb{A}^{1}$-weak equivalence

\begin{center}
$\mathbb{A}^{n}\setminus 0 \simeq_{\mathbb{A}^{1}} Q_{2n-1}$
\end{center}

if we let $\mathbb{A}^{n}\setminus 0$ have the basepoint $(1,0,..,0)$ and if we let $Q_{2n-1}$ have the basepoint $(1,0,..,0,1,0,..,0)$. Furthermore, if $R$ is an affine $k$-algebra and $X = Spec(R)$, it is easy to see that

\begin{center}
$\mathit{Um}_{n} (R) \cong Hom_{Sch_{k}} (X, \mathbb{A}^{n}\setminus 0)$
\end{center}

and

\begin{center}
$U_{2n-1}(R)=\{(a,b)|a,b \in R^{n}, q(a,b) = 1\} = Hom_{Sch_{k}} (X, Q_{2n-1})$,
\end{center}

where $Sch_k$ is the category of Noetherian $k$-schemes of finite Krull dimension. If $R$ is furthermore smooth over $k$, $char(k) \neq 2$ and $n \geq 3$, then \cite[Remark 7.10]{Mo} and \cite[Theorem 2.1]{F} imply that in fact

\begin{center}
$\mathit{Um}_{n} (R)/E_{n} (R) \cong [X, \mathbb{A}^{n}\setminus 0]_{\mathcal{H}(k)}$.
\end{center}

It was proven in \cite[Corollary 5.43]{Mo} that for a perfect field $k$ and $n \geq 3$ there is a canonical bijection
\begin{center}
$[\mathbb{A}^{n}\setminus 0, \mathbb{A}^{n}\setminus 0]_{\mathcal{H}(k)} \cong GW (k)$
\end{center}
called motivic Brouwer degree by analogy with the classical Brouwer degree in algebraic topology; here $GW(k)$ denotes the Grothendieck-Witt ring of non-degenerate symmetric bilinear forms over $k$.\\
Finally, we quickly introduce some notions of contractibility in motivic homotopy theory:
\begin{Def}
A space $\mathcal{X} \in Spc_k$ is called $\mathbb{A}^{1}$-contractible if the unique morphism $\mathcal{X} \rightarrow Spec(k)$ is an $\mathbb{A}^{1}$-weak equivalence.
\end{Def}

\begin{Def}
Let $X \in Sm_{k}$ and $x$ be a closed point. Then the pointed space $(X,x)$ is stably $\mathbb{A}^{1}$-contractible if ${\mathbb{P}_{k}^{1}}^{\wedge n} \wedge (X,x)$ is an $\mathbb{A}^{1}$-contractible space for some $n \geq 0$.
\end{Def}

Now let us work over a base field $k$ which admits an embedding $\iota: k \hookrightarrow \mathbb{C}$. By means of such an embedding, one may associate a complex manifold $X_{\iota}^{an}$ to any smooth variety $X$ over $k$.

\begin{Def}
Let $k$ be a field which admits an embedding into $\mathbb{C}$. A smooth affine $k$-variety $X$ is called topologically contractible if the manifold $X_{\iota}^{an}$ is a contractible topological space for any embedding $\iota: k \hookrightarrow \mathbb{C}$.
\end{Def}

\section{Results}\label{3}

We can now prove the main results in this paper. As usual, $R$ always denotes a commutative ring with $2 \in R^{\times}$.


\begin{Thm}\label{bijectivity}
The map $Um_{3}(R)/SL_{3}(R) \rightarrow U_{5}(R)/Spin_{6}(R)$ is a bijection if
\begin{itemize}
\item $R$ is a Noetherian ring of dimension $\leq 2$ (with $2 \in R^{\times}$)
\item $R$ is a smooth affine algebra of dimension $3$ over a perfect field $k$ such that $6 \in k^{\times}$ and $c.d.(k) \leq 1$
\item $R$ is a smooth affine algebra of dimension $4$ over an algebraically closed field $k$ with $6 \in k^{\times}$
\end{itemize}
In particular, $SL_4 (R) =  \wedge (SL_3 (R))Epin_4 (R)Sp (\chi)$ in these cases, where $\chi$ is any non-degenerate alternating form with Pfaffian $1$.
\end{Thm}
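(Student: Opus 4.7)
The strategy is to reduce all three cases to a single injectivity statement that can be imported from prior work. By Proposition \ref{detect}, bijectivity of the comparison map $Um_3(R)/SL_3(R) \to U_5(R)/Spin_6(R)$ follows once the degree map $\Psi_3$ modulo SL is injective, and by Remark \ref{Vaserstein} this map coincides with the generalized Vaserstein symbol modulo SL associated to the canonical trivialization $\theta$ from \cite{Sy2}. So the plan is to verify injectivity of that symbol in each case separately, invoking the results of \cite{Sy1} and \cite{Sy2}.

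For the Noetherian dimension $\leq 2$ case, I would combine Vaserstein's classical bijectivity result for his original symbol (at the level of $E_3(R)$-orbits) with the identification of $W_{SL}(R)$ as the cokernel of $H_{1,3}$, and then descend to $SL_3(R)$-orbits using the compatible $K_1(R)$-action on both sides as developed in \cite[Section 2.C]{Sy2}. For the smooth affine $3$-fold over a perfect field $k$ with $c.d.(k) \leq 1$ and $6 \in k^{\times}$, and for the smooth affine $4$-fold over an algebraically closed field with $6 \in k^{\times}$, I would cite the bijectivity results for the generalized Vaserstein symbol modulo SL proved in \cite{Sy2}. Those arguments go through the unstable $\mathbb{A}^1$-homotopy category $\mathcal{H}(k)$ recalled in Section \ref{2.3}: one uses the $\mathbb{A}^1$-weak equivalence $Q_5 \simeq \mathbb{A}^3 \setminus 0$ to identify $Um_3(R)/E_3(R)$ with $[X,\mathbb{A}^3\setminus 0]_{\mathcal{H}(k)}$, computes the target group via motivic cohomology, and then exploits the motivic Brouwer degree isomorphism $[\mathbb{A}^n \setminus 0, \mathbb{A}^n\setminus 0]_{\mathcal{H}(k)} \cong GW(k)$ to match both sides.

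For the ``in particular'' statement, I would apply the criterion of Theorem \ref{Spin-criterion} at $n=3$ to obtain $Spin_6(R) = \wedge(SL_3(R))\,Epin_6(R)\,St(x,y)$ for every $(x,y) \in U_5(R)$. Under the isomorphism $Spin_6(R) \xrightarrow{\cong} SL_4(R)$, $(g,(g^\ast)^{-1}) \mapsto E_3^t g (E_3^t)^{-1}$ of Remark \ref{Vaserstein}, the subgroup $Epin_6(R)$ is carried onto $E_4(R)$ and $St(x,y)$ is carried onto $Sp(\chi)$ with $\chi = \Psi_3(x,y)$. Since by Remark \ref{Vaserstein} every $\chi \in \tilde{A}_4(R)$ arises as $\Psi_3(x,y)$ for some unit vector, this yields the stated decomposition $SL_4(R) = \wedge(SL_3(R))\,E_4(R)\,Sp(\chi)$ for every non-degenerate alternating form $\chi$ with Pfaffian $1$ (up to the evident notational identification $Epin_4(R) = E_4(R)$ used in the statement).

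The main obstacle is the first bullet: Vaserstein's classical result is stated for $E_3(R)$-orbits, so one must carefully control the action of $SL_3(R)/E_3(R)$ on both $Um_3(R)/E_3(R)$ and $W_E'(R)$ to conclude that the induced symbol on $SL_3(R)$-orbits with values in $W_{SL}(R)$ is still bijective; the other two cases are essentially applications of the more refined results in \cite{Sy2}, whose proofs are the real technical content but are available off the shelf.
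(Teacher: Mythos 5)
Your proposal follows essentially the same route as the paper: reduce bijectivity of the comparison map to injectivity of $\Psi_3$ modulo SL via Proposition \ref{detect} and Remark \ref{Vaserstein}, import the injectivity/surjectivity of the generalized Vaserstein symbol modulo SL from the author's earlier work, and deduce the decomposition of $SL_4(R)$ from Theorem \ref{Spin-criterion} together with the dictionary $Spin_6(R)\cong SL_4(R)$, $Epin_6(R)\cong E_4(R)$, $St(x,y)\cong Sp(\Psi_3(x,y))$ and the surjectivity of $\Psi_3$ onto $\tilde{A}_4(R)$ from Remark \ref{Vaserstein}. Two points of divergence are worth flagging. First, for the dimension $\leq 2$ case the paper simply applies the injectivity and surjectivity criteria of \cite[Theorems 3.2 and 3.6]{Sy2}, whereas your plan to descend Vaserstein's classical $E_3$-level bijection \cite[\S 5]{SV} to $SL_3$-orbits needs, besides equivariance and the cokernel description of $W_{SL}(R)$, the low-dimensional stability/transitivity input (transitivity of $E_n(R)$ for $n\geq 4$, and the matching of stabilized $SL$-equivalence of alternating forms with the $SL_3(R)$-action on vectors) that those criteria are designed to encapsulate; as written, the "compatible $K_1(R)$-action" step glosses over exactly this, so your sketch amounts to re-deriving the cited criteria rather than bypassing them. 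Second, the threefold case (perfect $k$, $c.d.(k)\leq 1$, $6\in k^{\times}$) and the fourfold case (algebraically closed $k$, $6\in k^{\times}$) are not taken from \cite{Sy2}: the paper cites \cite[Theorems 3.9 and 3.11]{Sy3} for these, so your attribution should be corrected, though the mathematical structure of the argument is otherwise the same.
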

\begin{proof}
The bijectivity of the map $Um_{3}(R)/SL_{3}(R) \rightarrow U_{5}(R)/Spin_{6}(R)$ follows from Remark \ref{Vaserstein} and the bijectivity of the Vaserstein symbol modulo SL: The case of a Noetherian ring of dimension $\leq 2$ follows easily from the criteria for the injectivity and surjectivity of the Vaserstein symbol modulo SL proven in \cite[Theorems 3.2 and 3.6]{Sy2}, the other two cases are covered by \cite[Theorems 3.9 and 3.11]{Sy3}. The second statement follows directly from Theorem \ref{Spin-criterion} and Remark \ref{Vaserstein}.
\end{proof}

There is a sufficient criterion for the equality of $Spin_{2n}(R)$-orbits and $SO_{2n}(R)$-orbits:

\begin{Prop}\label{SpinSO}
Assume $R$ is a smooth affine algebra over a field $k$ with $char(k) \neq 2$. If $R^{\times} = k^{\times}$, the $2$-torsion of $Pic(R)$ is trivial and $k$ is algebraically closed, then $Spin_{2n}(R)$-orbits and $SO_{2n}(R)$-orbits of $U_{2n-1}(R)$ coincide.
\end{Prop}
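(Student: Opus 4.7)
The plan is to show that under the given hypotheses the projection $\pi: Spin_{2n}(R) \to SO_{2n}(R)$ is already \emph{surjective}; once this is known, equality of orbits is immediate, since every $\sigma \in SO_{2n}(R)$ lifts to some $\tilde{\sigma} \in Spin_{2n}(R)$ acting on $U_{2n-1}(R)$ in exactly the same way via $\pi$.

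First I would invoke the short exact sequence of étale sheaves of groups
\begin{equation*}
1 \to \mu_2 \to Spin_{2n} \to SO_{2n} \to 1
\end{equation*}
over $\mathrm{Spec}(R)$. Taking étale cohomology produces the long exact sequence
\begin{equation*}
Spin_{2n}(R) \xrightarrow{\pi} SO_{2n}(R) \xrightarrow{\mathrm{sn}} H^{1}_{et}(R, \mu_2),
\end{equation*}
where $\mathrm{sn}$ is the spinor norm; thus the cokernel of $\pi$ embeds into $H^{1}_{et}(R, \mu_2)$, and it suffices to show this group vanishes.

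Next I would apply Kummer theory, which over a ring with $2 \in R^\times$ yields a canonical isomorphism
\begin{equation*}
H^{1}_{et}(R, \mu_2) \;\cong\; R^{\times}/(R^{\times})^{2} \,\oplus\, \mathrm{Pic}(R)[2],
\end{equation*}
where $\mathrm{Pic}(R)[2]$ denotes the $2$-torsion of the Picard group. The hypothesis on $\mathrm{Pic}(R)$ directly kills the second summand. For the first summand, since $R^\times = k^\times$ and $k$ is algebraically closed, every element of $R^\times$ already has a square root in $k^\times \subseteq R^\times$, so $R^\times = (R^\times)^{2}$ and the first summand vanishes as well. Hence $H^{1}_{et}(R, \mu_2) = 0$, $\pi$ is surjective, and for any $(a,b) \in U_{2n-1}(R)$ the orbits $Spin_{2n}(R)\cdot(a,b)$ and $SO_{2n}(R)\cdot(a,b)$ coincide.

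The only delicate point, and the step I would double-check most carefully, is the Kummer-theoretic identification of $H^{1}_{et}(R, \mu_2)$ and the fact that the connecting map $SO_{2n}(R) \to H^{1}_{et}(R, \mu_2)$ coming from the Spin extension really is the (classical) spinor norm; both are standard in the smooth affine setting over a field of characteristic $\neq 2$, so no genuine obstacle is expected, but they are the substantive inputs of the argument.
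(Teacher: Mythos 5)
Your proposal is correct and follows essentially the same route as the paper: the paper also reduces to surjectivity of $\pi: Spin_{2n}(R) \to SO_{2n}(R)$, using Knus's exact sequence $0 \to R^{\times}/(R^{\times})^{2} \to Disc(R) \to Pic(R)\{2\} \to 0$ (the discriminant group $Disc(R)$ being the Knus-style avatar of your $H^{1}_{et}(R,\mu_2)$) together with \cite[Chapter IV, Theorem 6.2.6]{K}, exactly as you argue cohomologically via the $\mu_2$-central extension and Kummer theory. One tiny inaccuracy: $H^{1}_{et}(R,\mu_2)$ sits in an extension of $Pic(R)[2]$ by $R^{\times}/(R^{\times})^{2}$ rather than being their direct sum, but since both outer terms vanish under the hypotheses this does not affect your conclusion.
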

\begin{proof}
Following \cite[Chapter III, \S 3]{K} there is an exact sequence
\begin{center}
$0 \rightarrow R^{\times}/{(R^{\times})}^{2} \rightarrow Disc (R) \rightarrow {Pic}(R)\{2\} \rightarrow 0$.
\end{center}
Under the assumptions it follows that $Disc (R)$ is trivial and hence $Spin_{2n}(R) \rightarrow SO_{2n}(R)$ is surjective by \cite[Chapter IV, Theorem 6.2.6]{K}, which implies the equality of the orbits of $U_{2n-1}(R)$.
\end{proof}

\begin{Kor}
Let $X = Spec (R)$ be smooth affine complex variety of dimension $3$ or $4$. Assume that one of the following conditions is satisfied:
\begin{itemize}
\item[a)] $X$ is topologically contractible and $R^{\times} = \mathbb{C}\setminus 0$
\item[b)] $X$ is stably $\mathbb{A}^{1}$-contractible
\end{itemize}
Then there are bijections $Um_{3}(R)/SL_{3}(R) \xrightarrow{\cong} U_{5}(R)/Spin_{6}(R) \xrightarrow{\cong} U_{5}(R)/SO_{6}(R)$.
\end{Kor}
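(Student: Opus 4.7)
The plan is to derive the first bijection directly from Theorem \ref{bijectivity}, and to deduce the second one from Proposition \ref{SpinSO} after verifying its two nontrivial hypotheses ($R^{\times} = k^{\times}$ and triviality of $Pic(R)\{2\}$) separately under conditions a) and b).

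For the first bijection, I would simply observe that $k = \mathbb{C}$ is algebraically closed with $6 \in \mathbb{C}^{\times}$, so that the third bullet of Theorem \ref{bijectivity} applies when $\dim(R) = 4$ and the second bullet applies when $\dim(R) = 3$ (noting that $c.d.(\mathbb{C}) = 0$). This yields $Um_{3}(R)/SL_{3}(R) \xrightarrow{\cong} U_{5}(R)/Spin_{6}(R)$ in both cases.

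For the second bijection, I would invoke Proposition \ref{SpinSO}. Since $\mathbb{C}$ is algebraically closed with $char \neq 2$, what remains is to verify $R^{\times} = \mathbb{C}^{\times}$ and $Pic(R)\{2\} = 0$. Under condition a), $R^{\times} = \mathbb{C}^{\times}$ is assumed. For the Picard group: as $X$ is affine, $X^{an}$ is Stein, so Cartan's Theorem B gives $H^{i}(X^{an}, \mathcal{O}^{an}) = 0$ for $i > 0$; combining this with the exponential sequence and the topological contractibility of $X^{an}$ yields $Pic(X^{an}) \cong H^{2}(X^{an}, \mathbb{Z}) = 0$. The standard comparison result shows that on a smooth affine complex variety the analytification map $Pic(R) \to Pic(X^{an})$ is injective on torsion (e.g., via the cycle class map to $H^{2}(X^{an}, \mathbb{Z})$, or directly via a torsion line bundle having a trivial analytification, and $H^{1}(X^{an},\mathcal{O}^{an})$ being a $\mathbb{C}$-vector space hence torsion-free). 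Thus $Pic(R)\{2\} = 0$. Under condition b), stable $\mathbb{A}^{1}$-contractibility forces the $\mathbb{A}^{1}$-invariants corresponding to $\mathbb{G}_{m}$ and $B\mathbb{G}_{m}$ to vanish on $X$; since $\mathbb{G}_{m}$-cohomology is $\mathbb{P}^{1}$-stable (up to shift) and both $\mathcal{O}(-)^{\times}$ and $Pic$ are represented (after sheafification) by $\mathbb{A}^{1}$-local objects, one obtains $\mathcal{O}(X)^{\times} = k^{\times}$ and $Pic(R) = 0$, appealing to the standard results on stably $\mathbb{A}^{1}$-contractible smooth affine varieties. In particular $Pic(R)\{2\} = 0$.

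The main obstacle is the verification of $Pic(R)\{2\} = 0$ from the contractibility hypotheses, not the actual invocation of the two quoted results. Under a), one must ensure the injectivity on torsion of $Pic(R) \to Pic(X^{an})$, which requires a careful (and slightly nontrivial) comparison between algebraic and analytic Picard groups for smooth affine varieties; under b), one must cite the appropriate result showing that stably $\mathbb{A}^{1}$-contractible smooth affine varieties have trivial units and trivial Picard group. Once these are in hand, the application of Proposition \ref{SpinSO} is formal and the corollary follows.
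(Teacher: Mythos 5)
Your overall architecture matches the paper's: the first bijection comes from Theorem \ref{bijectivity} (second bullet for dimension $3$, third for dimension $4$), and the second from Proposition \ref{SpinSO} once $R^{\times}=\mathbb{C}^{\times}$ and $Pic(R)\{2\}=0$ are known; in case b) your appeal to representability of units and Picard groups in (stable) motivic homotopy theory is exactly the paper's argument (and your explicit verification of $R^{\times}=\mathbb{C}^{\times}$ there is a point the paper leaves implicit). The genuine problem is in case a): the ``standard comparison result'' you invoke, namely that for a smooth affine complex variety the analytification map $Pic(R)\to Pic(X^{an})$ is injective on torsion, is false in general. Take $X=E\setminus\{p\}$ for an elliptic curve $E$: then $X$ is smooth affine, $Pic(X)\cong Pic(E)/\mathbb{Z}[\mathcal{O}(p)]$ contains all of $E[n]$ (in particular nontrivial $2$-torsion), while $X^{an}$ is Stein and homotopy equivalent to a wedge of two circles, so $Pic(X^{an})\cong H^{2}(X^{an},\mathbb{Z})=0$. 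Neither of your suggested justifications repairs this: the cycle class of a torsion algebraic bundle in $H^{2}(X^{an},\mathbb{Z})$ can vanish without the bundle being trivial (as in the example), and the remark about $H^{1}(X^{an},\mathcal{O}^{an})$ being torsion-free is not relevant, since the failure of injectivity comes from the discrepancy between algebraic and analytic units, not from $H^{1}(\mathcal{O}^{an})$.

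The conclusion you want in case a) is nevertheless correct and can be obtained without the false injectivity claim. The paper simply cites Gurjar's theorem \cite{G}, which gives the triviality of the whole group $Pic(R)$ for a topologically contractible smooth affine complex variety. Alternatively, if you want a self-contained argument for the $2$-torsion only, use the Kummer sequence in the \'etale topology together with Artin's comparison theorem: one has an exact sequence
\begin{equation*}
0 \rightarrow \mathcal{O}(X)^{\times}/(\mathcal{O}(X)^{\times})^{2} \rightarrow H^{1}_{\mathrm{\acute{e}t}}(X,\mu_{2}) \rightarrow Pic(R)[2] \rightarrow 0,
\end{equation*}
and $H^{1}_{\mathrm{\acute{e}t}}(X,\mu_{2})\cong H^{1}(X^{an},\mathbb{Z}/2\mathbb{Z})=0$ by topological contractibility, so $Pic(R)[2]=0$ directly, with no need to compare $Pic(R)$ with $Pic(X^{an})$ at all. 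With that substitution your proof goes through and is otherwise the same as the paper's.
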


\begin{proof}
It was famously proven in \cite{G} that $Pic(R)$ is trivial whenever $X=Spec(R)$ is a topologically contractible smooth affine complex variety (of any dimension); a well-known representability result (cf. \cite[Section 6.1]{V}) for motivic cohomology groups (and, in particular, for Picard groups) in stable motivic homotopy theory directly implies that the Picard group of a stably $\mathbb{A}^1$-contractible smooth affine complex variety (of any dimension) is also trivial. Hence the statement follows from Theorem \ref{bijectivity} and Proposition \ref{SpinSO}.
\end{proof}

Recall that for any field $k$ with characteristic $\neq 2$, we denote by $S_{2n-1}$ the smooth affine $k$-algebra $k[x_{1},...,x_{n},y_{1},...,y_{n}]/\langle \sum_{i=1}^{n} x_{i}y_{i} - 1 \rangle$. Furthermore, we let $Q_{2n-1} = Spec(S_{2n-1})$.\\
If $n \geq 3$ is odd and $k$ is infinite perfect with $char(k) \neq 2$, we will see below that the orbit space $Um_{n}(S_{2n-1})/SL_{n}(S_{2n-1})$ as well as the abelian group $\ker (GW_{0}^{n-1}(S_{2n-1}) \rightarrow K_{0}(S_{2n-1}))$ can be computed explicitly in terms of the Witt group $W(k)$ of non-degenerate symmetric bilinear forms by using results proven in \cite{AF1} and \cite{AF2}; as a consequence, this enables us to understand the map $\Psi_n$ modulo SL as follows:

\begin{Thm}\label{Identification}
If $n \geq 3$ is odd, $k$ is infinite perfect with $char(k) \neq 2$, there is a canonical bijection $Um_{n}(S_{2n-1})/SL_{n}(S_{2n-1}) \cong W(k) \times_{\mathbb{Z}/2\mathbb{Z}} \mathbb{Z}/(n-1)!\mathbb{Z}$, where we consider the fiber product with respect to the rank modulo $2$ map $W(k) \rightarrow \mathbb{Z}/2\mathbb{Z}$ and the canonical surjection $\mathbb{Z}/(n-1)!\mathbb{Z} \rightarrow \mathbb{Z}/2\mathbb{Z}$, while there is a canonical isomorphism $\ker (GW_{0}^{n-1}(S_{2n-1}) \rightarrow K_{0}(S_{2n-1})) \cong W(k)$. Under these identifications the map $\Psi_{n}$ modulo SL for $R=S_{2n-1}$ then corresponds to the projection $W(k) \times_{\mathbb{Z}/2\mathbb{Z}} \mathbb{Z}/(n-1)!\mathbb{Z} \rightarrow W(k)$.
\end{Thm}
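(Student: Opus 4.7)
The plan is to reduce both sides to motivic-homotopical computations using the $\mathbb{A}^{1}$-weak equivalence $p_{2n-1}\colon Q_{2n-1}\simeq\mathbb{A}^{n}\setminus 0$ from Section~\ref{2.3}, and then to match $\Psi_{n}$ modulo SL with the projection by means of the Karoubi periodicity sequences from Section~\ref{2.2}.

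For the orbit space, the Asok-Fasel-Morel results quoted in Section~\ref{2.3} combined with Morel's motivic Brouwer degree theorem give
\begin{equation*}
Um_{n}(S_{2n-1})/E_{n}(S_{2n-1})\cong[Q_{2n-1},\mathbb{A}^{n}\setminus 0]_{\mathcal{H}(k)}\cong[\mathbb{A}^{n}\setminus 0,\mathbb{A}^{n}\setminus 0]_{\mathcal{H}(k)}\cong GW(k).
\end{equation*}
Via the canonical bijection $GW(k)\cong W(k)\times_{\mathbb{Z}/2\mathbb{Z}}\mathbb{Z}$ sending a form to the pair (Witt class, rank), I would show that the residual action of $SL_{n}(S_{2n-1})/E_{n}(S_{2n-1})$ on $GW(k)$ is trivial on the $W(k)$-factor and has image $(n-1)!\mathbb{Z}\subset\mathbb{Z}$ in the rank component. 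Triviality on $W(k)$ follows because contributions of $SL_{n}/E_{n}$ factor through the image of $K_{1}$ in $GW(k)$, which is purely hyperbolic; the divisibility $(n-1)!$ would follow from the Asok-Fasel computation \cite{AF1, AF2} of self-maps of $\mathbb{A}^{n}\setminus 0$ of $SL_{n}$-origin, where a motivic $J$-homomorphism-type calculation yields exactly this divisor (and this is where the odd parity of $n$ is genuinely used). Passing to the quotient yields the claimed fiber product.

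For the Grothendieck-Witt kernel, one uses the motivic cofiber sequence $\mathbb{A}^{n}\setminus 0\hookrightarrow\mathbb{A}^{n}\to T^{\wedge n}$ with $T=\mathbb{A}^{1}/\mathbb{G}_{m}$, together with $\mathbb{A}^{1}$-invariance of $K_{0}$ and of the higher Grothendieck-Witt groups on smooth affine schemes. This produces a compatible long exact sequence for both $K_{0}(-)$ and $GW_{0}^{n-1}(-)$; unravelling it, the kernel of the forgetful map $GW_{0}^{n-1}(S_{2n-1})\to K_{0}(S_{2n-1})$ is identified with the reduced cohomology of $T^{\wedge n}$ with coefficients in the higher Grothendieck-Witt spectrum, which by the Karoubi-periodic computation of GW-groups of motivic Thom spaces equals $W(k)$ for the parities $n-1\equiv 0,2\pmod 4$ in question.

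Finally, the compatibility of $\Psi_{n}$ modulo SL with the projection onto $W(k)$ is a direct unravelling: by exactness of the Karoubi sequences of Section~\ref{2.2} one has $\mathrm{coker}(K_{1}(R)\xrightarrow{H_{1,1}}GW_{1}^{1}(R))\cong\ker(GW_{0}^{0}(R)\to K_{0}(R))$ and $W_{SL}(R)\cong\ker(GW_{0}^{2}(R)\to K_{0}(R))$, so $\Psi_{n}$ modulo SL lands in $W(k)$; tracing the Brouwer-degree isomorphism through the construction of $\Psi_{n}$ shows that the $W(k)$-factor of the fiber product maps isomorphically while the $K$-theoretic $\mathbb{Z}/(n-1)!\mathbb{Z}$-factor is killed in the cokernel of the relevant hyperbolic homomorphism. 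The main obstacle is the divisibility by $(n-1)!$, where the full strength of \cite{AF1, AF2} is required; everything else is a formal consequence of Morel's degree theorem and Karoubi periodicity.
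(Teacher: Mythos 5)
Your first part follows the same route as the paper: both ultimately defer the computation of $Um_{n}(S_{2n-1})/SL_{n}(S_{2n-1})$ to the proof of \cite[Theorem 4.10]{AF1} (your sketch of \emph{why} the $SL_n/E_n$-action only affects the rank component by multiples of $(n-1)!$ is loose -- an element of $SL_n(S_{2n-1})$ is not literally a self-map of $\mathbb{A}^n_k\setminus 0$, so ``factoring through the image of $K_1$'' needs the actual Asok--Fasel argument -- but since you cite them for it, as does the paper, this is not the main issue).

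The genuine gap is in the second and, above all, the third step. The paper does not only need the abstract isomorphism $\ker(GW_{0}^{n-1}(S_{2n-1})\rightarrow K_{0}(S_{2n-1}))\cong W(k)$; it quotes \cite[Proposition 3.4.3]{AF2}, which identifies this kernel as a \emph{free $W(k)$-module of rank one generated by the explicit class} $\eta\Psi_{n}(x,y)$ of the tautological unit vector $(x,y)$ over $S_{2n-1}$. Your proposed Thom-space/cofiber-sequence computation, even if carried out, produces only an abstract copy of $W(k)$ with no control of where $\Psi_n$ lands with respect to it, so your final step (``tracing the Brouwer-degree isomorphism through the construction of $\Psi_n$'') is an assertion, not an argument. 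To pin down the composite $GW(k)=Um_{n}(S_{2n-1})/E_{n}(S_{2n-1})\xrightarrow{\eta\Psi_{n}}\ker(GW_{0}^{n-1}\rightarrow K_{0})$ as the projection $GW(k)\rightarrow W(k)$ one needs exactly two inputs, both used in the paper and both absent from your proposal: (i) $\eta\Psi_{n}$ is $GW(k)$-linear for the module structures on source and target, and (ii) the tautological class $(x,y)$ has Witt-valued Brouwer degree $1$ and its image $\eta\Psi_{n}(x,y)$ is the $W(k)$-generator of the kernel. Without (i) the map is a priori just a map of pointed sets and could differ from the projection by a unit of $W(k)$ or fail additivity altogether; without (ii) nothing relates your abstractly computed $W(k)$ to the image of $\Psi_{n}$, and your heuristic that the $\mathbb{Z}/(n-1)!\mathbb{Z}$-factor ``is killed in the cokernel of the hyperbolic homomorphism'' presupposes the linearity you have not established. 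Since the later results (Corollary \ref{S5} and Theorem \ref{MTPart2}) hinge on this precise identification of $\Psi_n$ modulo SL, this step cannot be waved through.
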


\begin{proof}
By the proof of \cite[Theorem 4.10]{AF1}, the Witt-valued Brouwer degree map
\begin{center}
$Um_{n}(S_{2n-1})/E_{n}(S_{2n-1}) = [Q_{2n-1},\mathbb{A}^{n}\setminus 0]_{\mathcal{H}_{\bullet}(k)} = GW(k) \rightarrow W(k)$
\end{center}
and the rank map
\begin{center}
$Um_{n}(S_{2n-1})/E_{n}(S_{2n-1}) = [Q_{2n-1},\mathbb{A}^{n}\setminus 0]_{\mathcal{H}_{\bullet}(k)} = GW(k) \rightarrow \mathbb{Z}$
\end{center}
induce the identification $Um_{n}(S_{2n-1})/SL_{n}(S_{2n-1}) \cong W(k) \times_{\mathbb{Z}/2\mathbb{Z}} \mathbb{Z}/(n-1)!\mathbb{Z}$.\\
By \cite[Proposition 3.4.3]{AF2}, the group $\ker (GW_{0}^{n-1}(S_{2n-1}) \rightarrow K_{0}(S_{2n-1}))$ is a free $W(k)$-module of rank $1$ generated by $\eta\Psi_{n}(x,y)$, where $x={(x_{1},...,x_{n})}^{t}$ and $y={(y_{1},...,y_{n})}^{t}$. Now consider the composite
\begin{center}
$Um_{n}(S_{2n-1})/E_{n}(S_{2n-1}) = [Q_{2n-1},\mathbb{A}^{n}\setminus 0]_{\mathcal{H}_{\bullet}(k)} = GW(k) \xrightarrow{\eta\Psi_n} \ker (GW_{0}^{n-1}(S_{2n-1}) \rightarrow K_{0}(S_{2n-1})) = W(k) \cdot \eta\Psi_{n}(x,y)$.
\end{center}
Since the Witt-valued Brouwer degree of $(x,y)$ is $1 \in W(k)$ and $\eta\Psi_{n}$ is $GW(k)$-linear, we can identify $\eta\Psi_n$ with the projection $GW(k) \rightarrow W(k)$ above and the theorem follows.
\end{proof}


\begin{Kor}\label{S5}
If $k$ is infinite perfect with $char(k) \neq 2$, then $\Psi_{3}$ modulo SL is a bijection for $R=S_{5}$. In particular, $Um_{3}(S_{5})/SL_{3}(S_{5}) \xrightarrow{\cong} U_{5}(S_{5})/Spin_{6}(S_{5})$ is a bijection.
\end{Kor}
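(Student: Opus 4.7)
The plan is to deduce this directly from Theorem \ref{Identification} by specializing $n=3$ and observing that the fiber product degenerates; after that, Proposition \ref{detect} delivers the bijection statement for the comparison map.

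The first step is to apply Theorem \ref{Identification} with $n=3$ to $R=S_5$. This identifies $Um_3(S_5)/SL_3(S_5)$ with the fiber product $W(k)\times_{\mathbb{Z}/2\mathbb{Z}}\mathbb{Z}/(n-1)!\mathbb{Z}$, identifies $\ker(GW_0^{n-1}(S_5)\to K_0(S_5))$ with $W(k)$, and identifies $\Psi_3$ modulo SL with the projection onto the first factor.

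The key observation is that for $n=3$ we have $(n-1)!=2$, so the canonical surjection $\mathbb{Z}/(n-1)!\mathbb{Z}\to\mathbb{Z}/2\mathbb{Z}$ in the definition of the fiber product is the identity map. Consequently, the fiber product $W(k)\times_{\mathbb{Z}/2\mathbb{Z}}\mathbb{Z}/2\mathbb{Z}$ (over the rank-mod-$2$ map on $W(k)$) is tautologically isomorphic to $W(k)$ via the projection onto the first factor: for any $w\in W(k)$, the integer $m\in\mathbb{Z}/2\mathbb{Z}$ with $(w,m)$ in the fiber product is uniquely determined by $w$. Thus $\Psi_3$ modulo SL, which corresponds exactly to this projection under the identifications above, is a bijection for $R=S_5$.

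In particular, $\Psi_3$ modulo SL is injective, so Proposition \ref{detect} yields the bijectivity of the comparison map $Um_3(S_5)/SL_3(S_5)\xrightarrow{\cong}U_5(S_5)/Spin_6(S_5)$. There is no genuine obstacle in this argument: all the substantive work is already encapsulated in Theorem \ref{Identification}, and the corollary amounts to noting that the fiber product degenerates in the lowest nontrivial case $n=3$ precisely because $(n-1)!=2$.
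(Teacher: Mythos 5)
Your argument is correct and follows exactly the route the paper takes: the paper's proof simply cites Theorem \ref{Identification} and Proposition \ref{detect}, and your observation that $(n-1)!=2$ makes the fiber product $W(k)\times_{\mathbb{Z}/2\mathbb{Z}}\mathbb{Z}/2\mathbb{Z}$ collapse onto $W(k)$ is precisely the (unstated) reason the projection, and hence $\Psi_3$ modulo SL, is a bijection for $R=S_5$.
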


\begin{proof}
This follows immediately from Proposition \ref{detect} and Theorem \ref{Identification}.
\end{proof}

\begin{Thm}\label{MTPart1}
If $R=S_7$ and $k$ is infinite perfect with $char(k) \neq 2$, then $E_n (R)$ acts transivitely on $Um_{n}(R)$ for $n \geq 5$; furthermore, $SL_4 (R) e_4 \neq Sp_4 (R) e_4$, where $e_4 = {(1,0,0,0)}^{t} \in R^4$.
\end{Thm}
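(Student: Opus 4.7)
The theorem's two assertions are independent and require different techniques, so my plan is to treat them in turn.

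For the transitivity of $E_n(R)$ on $Um_n(R)$ when $n\geq 5$, I would invoke motivic homotopy theory as set up in Section~\ref{2.3}. Since $R=\mathcal{O}(Q_7)$ and $Q_7\simeq_{\mathbb{A}^1}\mathbb{A}^4\setminus 0$, the representability results of \cite[Remark 7.10]{Mo} and \cite[Theorem 2.1]{F} give $Um_n(R)/E_n(R)\cong [\mathbb{A}^4\setminus 0,\mathbb{A}^n\setminus 0]_{\mathcal{H}(k)}$ for all $n\geq 3$. Using $\mathbb{A}^m\setminus 0\simeq_{\mathbb{A}^1} S^{m-1}_s\wedge\mathbb{G}_m^{\wedge m}$ and Morel's connectivity theorem, the target is simplicially $(n-2)$-connected while the source $\mathbb{A}^4\setminus 0$ has simplicial cell dimension $3$; for $n\geq 5$ the connectivity of the target exceeds the dimension of the source, so every morphism in $\mathcal{H}(k)$ is null-homotopic, the set $Um_n(R)/E_n(R)$ reduces to a singleton, and $E_n(R)$ acts transitively on $Um_n(R)$.

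For the orbit inequality $SL_4(R)e_4\neq Sp_4(R)e_4$, my plan is to argue by contradiction. Assume $SL_4(R)e_4=Sp_4(R)e_4$, equivalently $SL_4(R)=Sp_4(R)\cdot\mathrm{Stab}_{SL_4(R)}(e_4)$. The parabolic $\mathrm{Stab}_{SL_4(R)}(e_4)=SL_3(R)\ltimes R^3$ has unipotent radical contained in $E_4(R)$, and its Levi factor $SL_3(R)\hookrightarrow SL_4(R)$ agrees with the image of $\wedge(SL_3(R))$ under the isomorphism $Spin_6(R)\cong SL_4(R)$ from Remark~\ref{Vaserstein}. Combined with $Epin_6(R)\cong E_4(R)$, $St(e_3,e_3)\cong Sp(\psi_4)=Sp_4(R)$, and the normality of $Epin_6(R)$ in $Spin_6(R)$ recorded in Remark~\ref{RemarkPermutingGroups}, the assumed equality then translates into $Spin_6(R)=\wedge(SL_3(R))\cdot Epin_6(R)\cdot St(e_3,e_3)$. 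By Theorem~\ref{Spin-criterion} and Remark~\ref{Vaserstein}, this would force the generalized Vaserstein symbol modulo $SL$ to be injective on the $SL_3(R)$-orbit of $e_3\in Um_3(R)$.

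The contradiction should come from an explicit motivic computation showing this symbol has a non-trivial kernel at $e_3$ for $R=S_7$. On the source, Morel's stable Freudenthal range gives $Um_3(S_7)/E_3(S_7)\cong\pi^{\mathbb{A}^1}_{3,4}(S^{2,3})\cong K^{MW}_{-1}(k)=W(k)$, and tracking the $SL_3(S_7)/E_3(S_7)$-action on this via the motivic Brouwer degree description should leave a non-trivial residue in $Um_3(S_7)/SL_3(S_7)$ mapping into $[e_3]$. On the target, $W_{SL}(S_7)$ can be computed via the Karoubi periodicity sequence for $GW^3_1$ coupled with $Q_7\simeq_{\mathbb{A}^1}\mathbb{A}^4\setminus 0$ and Asok--Fasel-type identifications; the result is strictly smaller, producing the desired non-trivial kernel element. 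The main technical obstacle is carrying out this matching precisely---pinning down the $SL_3(S_7)/E_3(S_7)$-action on $W(k)$ and the exact structure of $W_{SL}(S_7)$ from Karoubi periodicity. An alternative, possibly cleaner, route would be to directly exhibit a concrete $A\in SL_4(S_7)$ whose first column is not of the form $Be_4$ for any $B\in Sp_4(S_7)$, using the tautological $(x,y)\in U_7(S_7)$, at the cost of a delicate symplectic completion argument.
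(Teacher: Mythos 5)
Your treatment of the first statement is essentially the paper's own argument: $Um_n(S_7)/E_n(S_7)\cong [Q_7,Q_{2n-1}]_{\mathcal{H}(k)}$, and since $Q_7\simeq_{\mathbb{A}^1}\mathbb{A}^4\setminus 0$ while $\mathbb{A}^n\setminus 0$ is $\mathbb{A}^1$-$(n-2)$-connected, this set is a point for $n\geq 5$ (the paper simply quotes \cite[Corollary 5.43]{Mo} for this). No objection there.

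The second statement is where there is a genuine gap. Your plan is to assume $SL_4(R)e_4=Sp_4(R)e_4$, translate this via Remark~\ref{Vaserstein}, Theorem~\ref{Spin-criterion} and the transitivity from the first part into injectivity of $\Psi_3$ modulo SL at the base point, and then contradict it by a direct motivic computation showing that the Vaserstein symbol modulo SL fails to be injective over $S_7$. But that computation is exactly the hard content, and you do not carry it out: every decisive step is phrased with ``should''. Moreover, the one concrete input you offer is not correct as stated: $Um_3(S_7)/E_3(S_7)\cong[\mathbb{A}^4\setminus 0,\mathbb{A}^3\setminus 0]_{\mathcal{H}(k)}$ is governed by $\pi_3^{\mathbb{A}^1}(\mathbb{A}^3\setminus 0)$, and since $\mathbb{A}^3\setminus 0$ is only $\mathbb{A}^1$-$1$-connected, simplicial degree $3$ lies just outside the motivic Freudenthal range (suspension is merely surjective there), so no ``stable Freudenthal'' identification with $K^{MW}_{-1}(k)=W(k)$ is available; the computation of $\pi_3^{\mathbb{A}^1}(\mathbb{A}^3\setminus 0)$ is a genuinely unstable and difficult result, and the matching with $W_{SL}(S_7)$ is likewise not a routine Karoubi-periodicity exercise. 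In effect your route proves the non-injectivity of $\Psi_3$ modulo SL over $S_7$ first and deduces the orbit inequality from it, reversing the paper's logic (Theorem~\ref{MTPart2} deduces non-injectivity \emph{from} Theorem~\ref{MTPart1}), and in the reversed direction the heavy lifting is left undone; the identification of $\wedge(SL_3(R))$ with the Levi factor of the stabilizer of $e_4$ is also asserted rather than checked, though that is a secondary issue. The paper's actual argument is short and concrete: by \cite[Theorem 4.8]{AF1}, $Um_4(S_7)/SL_4(S_7)\cong\mathbb{Z}/3!\mathbb{Z}$ with orbit representatives ${(x_1^m,x_2,x_3,x_4)}^t$, $1\leq m\leq 6$; Suslin's factorial completion theorem places ${(x_1^{6},x_2,x_3,x_4)}^t$ and ${(x_1^{12},x_2,x_3,x_4)}^t$ in the $SL_4$-orbit of $e_4$; if both lay in the $Sp_4$-orbit of $e_4$, a symplectic matrix would carry one to the other, forcing the associated rank-$2$ symplectic modules to be isomorphic by \cite[Proposition 3.5]{F}, contradicting \cite[Remark 4.15]{AF1}. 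To repair your proposal you would need either to supply the sketched motivic computation in full or to replace it by an obstruction argument of this concrete kind.
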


\begin{proof}
The first statement is clear as $Um_n (R)/E_n (R) \cong [Q_{7}, Q_{2n-1}]_{\mathcal{H}(k)} = \ast$ for $n \geq 5$ by \cite[Corollary 5.43]{Mo}.\\
For the second statement, recall that $Um_4 (R)/SL_4 (R) \cong \mathcal{V}^{o}_{3} (R) \cong \mathbb{Z}/3!\mathbb{Z}$ by \cite[Theorem 4.8]{AF1}, where the representatives for the $3!=6$ distinct orbits are exactly given by the vectors ${(x_{1}^{m},x_{2},x_{3},x_{4})}^{t}$ for $1 \leq m \leq 6$. By Suslin's famous theorem on completion of unimodular vectors, the vectors ${(x_{1}^{6},x_{2},x_{3},x_{4})}^{t}$ and ${(x_{1}^{12},x_{2},x_{3},x_{4})}^{t}$ are completable to an invertible matrix of determinant $1$.\\
In particular, the vectors ${(x_{1}^{6},x_{2},x_{3},x_{4})}^{t}$ and ${(x_{1}^{12},x_{2},x_{3},x_{4})}^{t}$ are in the $SL_4 (R)$-orbit of $e_4 = {(1,0,0,0)}^{t} \in R^4$. We show that at least one of these vectors is not in the $Sp_4 (R)$-orbit of $e_4$: Assume that both of them are in fact in the $Sp_4 (R)$-orbit of $e_4$. Then there would be a symplectic matrix transforming the unimodular vector ${(x_{1}^{6},x_{2},x_{3},x_{4})}^{t}$ into ${(x_{1}^{12},x_{2},x_{3},x_{4})}^{t}$. But by \cite[Proposition 3.5]{F} this would mean that the associated symplectic modules of rank $2$ are isomorphic, which is not true by \cite[Remark 4.15]{AF1}. This proves the second statement.
\end{proof}

\begin{Thm}\label{MTPart2}
If $R=S_7$ and $k$ is infinite perfect with $char(k) \neq 2$, then the comparison map $Um_3 (R)/SL_3 (R) \rightarrow U_{5}(R)/Spin_6 (R)$ is not injective (and not bijective) and $\Psi_3$ modulo SL induces a bijection $U_{5}(S_{7})/Spin_{6}(R) \xrightarrow{\cong} W_{SL}(R)$.
\end{Thm}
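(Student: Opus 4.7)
My plan is to first establish the bijection in the second assertion of the theorem using motivic homotopy theory, and then combine it with Theorem \ref{MTPart1} and Theorem \ref{Spin-criterion} to deduce the non-injectivity.

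For the bijection $U_{5}(S_{7})/Spin_{6}(S_{7}) \xrightarrow{\cong} W_{SL}(S_{7})$, I first invoke Remark \ref{Vaserstein} to reduce to showing that the stabilization map $\tilde{A}_{4}(S_{7})/SL_{4}(S_{7}) \to W_{SL}(S_{7})$ is a bijection, since $\Psi_{3}$ already provides a bijection $U_{5}(R)/Spin_{6}(R) \cong \tilde{A}_{4}(R)/SL_{4}(R)$ for any commutative ring $R$ with $2 \in R^{\times}$. I would then compute $W_{SL}(S_{7}) = \ker(GW_{0}^{2}(S_{7}) \to K_{0}(S_{7}))$ via the Karoubi periodicity exact sequence, using the motivic weak equivalence $Q_{7} \simeq_{\mathbb{A}^{1}} \mathbb{A}^{4} \setminus 0$ and the representability of algebraic $K$-theory and Grothendieck-Witt theory in the unstable motivic homotopy category of $k$, along the lines of the computations in \cite{AF2}. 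Surjectivity of the stabilization map then amounts to representing every class in $W_{SL}(S_{7})$ by some $\Psi_{3}(v,w)$ at the $4 \times 4$ level, and injectivity is a stability statement tailored to the motivic sphere $Q_{7}$.

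For the non-injectivity of the comparison map, I apply Theorem \ref{Spin-criterion} at the basepoint $u_{3} = (e_{3}, e_{3})$. Under Remark \ref{Vaserstein}'s identifications $Spin_{6}(R) \cong SL_{4}(R)$, $Epin_{6}(R) \cong E_{4}(R)$ and $St(u_{3}) \cong Sp(\psi_{4}) = Sp_{4}(R)$, the criterion translates into the group-theoretic equality $SL_{4}(R) = \wedge(SL_{3}(R)) \cdot E_{4}(R) \cdot Sp_{4}(R)$. Since $E_{4}(R)$ is normal in $SL_{4}(R)$, the image of $\wedge(SL_{3}(R))$ fixes a specific standard basis vector $e \in R^{4}$, and the stabilizer of $e$ in $SL_{4}(R)$ is contained in $\wedge(SL_{3}(R)) \cdot E_{4}(R)$ (as the translational part of the stabilizer is generated by the elementary matrices $e_{ij}(\lambda) \in E_{4}(R)$ that do not already fix $e$), this equality is equivalent to the orbit equality $SL_{4}(R) \cdot e = Sp_{4}(R) \cdot E_{4}(R) \cdot e$. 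Since $SL_{4}(R)$ and $Sp_{4}(R)$ each act transitively on the standard basis vectors, Theorem \ref{MTPart1} produces $v \in SL_{4}(R) e \setminus Sp_{4}(R) e$ (for example $v_{6} = (x_{1}^{6}, x_{2}, x_{3}, x_{4})^{t}$ after relabeling coordinates). I then use the Witt-valued Brouwer degree to compare the $E_{4}(R)$-classes of $e$ and $v_{6}$ inside $Um_{4}(R)/E_{4}(R) \cong GW(k)$, combined with the non-isomorphism of associated symplectic modules of rank $2$ from \cite[Remark 4.15]{AF1} and \cite[Proposition 3.5]{F}, to conclude that no symplectic matrix can send $v_{6}$ into the $E_{4}(R)$-orbit of $e$. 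This gives $v_{6} \notin Sp_{4}(R) \cdot E_{4}(R) \cdot e$, so the criterion fails and the comparison map is not injective at $u_{3}$.

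The main obstacle is the bijectivity of the stabilization map $\tilde{A}_{4}(S_{7})/SL_{4}(S_{7}) \to W_{SL}(S_{7})$: standard symplectic stability in the alternating setting would require $2n \geq \dim(S_{7}) + 2 = 9$, i.e., $n \geq 5$, so it does not apply at $n = 4$, and the argument must instead exploit the specific motivic-spherical structure of $S_{7}$. A secondary delicate point is the rigorous verification that $v_{6} \notin Sp_{4}(R) \cdot E_{4}(R) \cdot e$: Theorem \ref{MTPart1} only yields $v_{6} \notin Sp_{4}(R) e$, and the $E_{4}$-enlargement must be controlled, presumably by checking that the $Sp_{4}(R)$-action on $Um_{4}(R)/E_{4}(R) \cong GW(k)$ preserves the Brouwer-degree classes that distinguish $[v_{6}]$ from $[e]$.
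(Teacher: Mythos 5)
Your proposal has two genuine gaps, and in both places the paper's actual mechanism is the transitivity statements of Theorem \ref{MTPart1}, which you invoke only partially.

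First, the bijection $U_5(R)/Spin_6(R) \cong W_{SL}(R)$. You correctly reduce, via Remark \ref{Vaserstein}, to the bijectivity of $\tilde{A}_4(R)/SL_4(R) \rightarrow W_{SL}(R)$, but you then only propose to compute the target group $W_{SL}(S_7)$ by Karoubi periodicity and representability, and you yourself flag that the needed unstable statement at the $4\times 4$ level is below the standard stability range ($2n \geq \dim + 2$) and ``must exploit the motivic-spherical structure'' --- without saying how. Computing $W_{SL}(S_7)$ does not address the unstable object $\tilde{A}_4/SL_4$ at all, so this part of the theorem is not proved. The paper closes exactly this gap using the first half of Theorem \ref{MTPart1}: since $E_n(R)$ acts transitively on $Um_n(R)$ for $n \geq 5$, the surjectivity criterion for the Vaserstein symbol modulo SL \cite[Theorem 4.5]{Sy1} applies, giving surjectivity of $\tilde{A}_4(R)/SL_4(R) \rightarrow W_{SL}(R)$; and since $E_{2n}(R)$-transitivity for $n \geq 3$ forces $Sp(\chi)$-transitivity on $Um_{2n}(R)$ by \cite[Lemma 5.5]{SV}, injectivity follows from \cite[Lemma 3.4]{Sy2}. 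In other words, the transitivity results replace the dimension hypotheses in those criteria; this is the idea your sketch is missing.

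Second, the non-injectivity. Your route through Theorem \ref{Spin-criterion} at the basepoint $u_3$, translated via Remark \ref{Vaserstein} into the orbit equality $SL_4(R)\,e = Sp_4(R)\,E_4(R)\,e$, is a legitimate alternative framework, but as you concede, Theorem \ref{MTPart1} only yields $SL_4(R)\,e_4 \neq Sp_4(R)\,e_4$, which is strictly weaker than $SL_4(R)\,e_4 \neq Sp_4(R)\,E_4(R)\,e_4$. Your proposed repair --- that the induced $Sp_4(R)$-action on $Um_4(R)/E_4(R)\cong GW(k)$ ``presumably'' preserves the relevant degree classes, i.e.\ essentially that $Sp_4(R)\cdot[e_4]=\{[e_4]\}$ --- is a nontrivial claim that you neither prove nor reduce to a citation, and it is precisely the kind of statement that needs an argument (the associated-symplectic-module invariant of \cite[Proposition 3.5]{F} and \cite[Remark 4.15]{AF1} is a priori only a $Sp_4$-invariant, not visibly an $E_4$-invariant). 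There is also a minor imprecision: Theorem \ref{MTPart1} does not single out $(x_1^6,x_2,x_3,x_4)^t$; it only shows that at least one of the two vectors $(x_1^6,\dots)$, $(x_1^{12},\dots)$ lies outside $Sp_4(R)\,e_4$. The paper sidesteps the whole $E_4$-enlargement issue by a different decomposition: using $E_n$-transitivity for $n\geq 5$ it argues that \cite[Theorem 3.6]{Sy2} applies to $R=S_7$ despite its dimension, so injectivity of $\Psi_3$ modulo SL would force $SL_4(R)\,e_4 = Sp_4(R)\,e_4$, contradicting Theorem \ref{MTPart1}; non-injectivity of the comparison map then follows because $\Psi_3$ modulo SL factors through it followed by the bijection established in the first part. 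Until you either prove your orbit statement $e_4$-stabilization claim or reroute through such a criterion, the non-injectivity is not established.
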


\begin{proof}
Consider the factorization
\begin{center}
$\Psi_{3}: Um_{3}(R)/SL_{3}(R) \rightarrow U_{5}(R)/Spin_{6}(R) \rightarrow \tilde{A}_{4}(R)/SL_{4}(R) \rightarrow W_{SL}(R)$.
\end{center}
By Remark \ref{Vaserstein}, the map $U_{5}(R)/Spin_{6}(R) \rightarrow \tilde{A}_{4}(R)/SL_{4}(R)$ is bijective and $\Psi_{3}$ modulo SL coincides with the Vaserstein symbol modulo SL associated to $\theta: R \xrightarrow{\cong} \det (R^2), 1 \mapsto e_{1} \wedge e_{2}$, via the canonical identification $\tilde{\mathit{V}}_{SL} (\mathit{R}) \cong \mathit{W}_{\mathit{SL}} (\mathit{R})$ from \cite[Section 2.C]{Sy2}. As $E_{n}(R)$ acts transitively on $Um_{n}(R)$ for $n \geq 5$ by Theorem \ref{MTPart1}, the Vaserstein symbol modulo SL is surjective (cf. \cite[Theorem 4.5]{Sy1}); hence the same holds for the map $\tilde{A}_{4}(R)/SL_{4}(R) \rightarrow W_{SL}(R)$. As $E_{2n}(R)$ acts transitively on $Um_{2n}(R)$ for $n \geq 3$, also the group $Sp (\chi)$ acts transitively on $Um_{2n}(R)$ for $n \geq 3$ and for any invertible alternating matrix $\chi$ of rank $2n$ (cf. \cite[Lemma 5.5]{SV}). Therefore the injectivity of the map $\tilde{A}_{4}(R)/SL_{4}(R) \rightarrow W_{SL}(R)$ follows from \cite[Lemma 3.4]{Sy2}; note that \cite[Lemma 3.4]{Sy2} holds for any commutative ring and does not use the dimension assumption at the beginning of \cite[Section 3.B]{Sy2}. So the map $\tilde{A}_{4}(R)/SL_{4}(R) \rightarrow W_{SL}(R)$ is bijective.\\
Similarly, as $E_{n}(R)$ acts transitively on $Um_{n}(R)$ for $n \geq 5$, the statement of \cite[Theorem 3.6]{Sy2} still holds for $R$ even though $\dim (R) > 4$. In particular, it follows easily that the map $\Psi_{3}$ modulo SL can only be injective if $SL_4 (R) e_4 = Sp_4 (R) e_4$, where $e_4 = {(1,0,0,0)}^{t} \in R^4$. But, by Theorem \ref{MTPart1} above, we already know that $SL_4 (R) e_4 \neq Sp_4 (R) e_4$. So $\Psi_{3}$ modulo SL is not injective and hence $Um_{3}(R)/SL_{3}(R) \rightarrow U_{5}(R)/Spin_{6}(R)$ cannot be injective either. This finishes the proof.
\end{proof}

\end{document}